 \newtheorem{thm}{Theorem}[section]
 \newtheorem{cor}[thm]{Corollary}
 \newtheorem{lem}[thm]{Lemma}
 \theoremstyle{definition}
 \newtheorem{defn}[thm]{Definition}
 \theoremstyle{remark}
 \newtheorem{rem}[thm]{Remark}
 \numberwithin{equation}{section}
\begin{document}
\title[Classification of quasi-trigonometric solutions]{Classification of quasi-trigonometric solutions of the classical
Yang--Baxter equation}
\author{Iulia Pop and Alexander Stolin} 

\address{Department of Mathematical Sciences, University of Gothenburg, Sweden}
\email{iulia@math.chalmers.se, astolin@math.chalmers.se}

\subjclass[2000]{Primary 17B37, 17B62; Secondary 17B81}
\keywords{Classical Yang--Baxter equation, $r$-matrix, Manin triple,
parabolic subalgebra, generalized Belavin--Drinfeld data}
\begin{abstract}
It was proved by Montaner and Zelmanov that 
up to classical twisting Lie bialgebra structures 
on $\mathfrak{g}[u]$ fall into four classes.
Here $\mathfrak{g}$ is a simple complex finite-dimensional 
Lie algebra.
It turns out that classical twists within one of these
four classes are in a one-to-one correspondence with the 
so-called quasi-trigonometric solutions of the
classical Yang-Baxter equation.
In this paper we give a complete list of the 
quasi-trigonometric solutions in terms of 
sub-diagrams of the certain Dynkin diagrams
related to $\mathfrak{g}$.
We also explain how to quantize the corresponding
Lie bialgebra structures.
\end{abstract}
\maketitle
\section{Introduction}
The present paper constitutes a step towards the classification 
of quantum groups. We describe an algorithm for the quantization 
of all Lie bialgebra structures on the polynomial Lie algebra 
$P=\mathfrak{g}[u]$, where $\mathfrak{g}$ is a simple complex
finite-dimensional Lie algebra.  

Lie bialgebra structures 
on $P$, up to so-called \textit{classical twisting}, have been classified by 
F. Montaner and E. Zelmanov in \cite{MZ}. We recall that given a Lie 
co-bracket $\delta$ on $P$, a \textit{classical twist} 
is an element $s\in P\wedge P$ such that 
\begin{equation}
\mathrm{CYB}(s)+\mathrm{Alt}(\delta\otimes \mathrm{id})(s)=0,
\end{equation}
where $\mathrm{CYB}$ is the l.h.s. of the classical Yang-Baxter equation.

We also note that a classical twist does not change the classical double 
$D_{\delta}(P)$
associated to a given Lie bialgebra structure $\delta$. 
If $\delta^s$ is the twisting co-bracket
via $s$, then the Lie bialgebras $(P,\delta)$ and $(P,\delta^s)$ are in the same class, i.e. there exists a Lie algebra isomorphism between 
$D_{\delta}(P)$ and $D_{\delta^s}(P)$, preserving the canonical forms and compatible with the canonical embeddings of $P$ into the doubles.

According to the results of Montaner and Zelmanov, there are four Lie bialgebra
structures on $P$ up to classical twisting. Let us present them:
\vspace{0.5cm}

\textbf{Case 1.} Consider $\delta_1=0$. Consequently, $D_1(P)=P+\varepsilon P^{*}$, where $\varepsilon^2=0$. The symmetric nondegenerate invariant form $Q$ is 
given by the canonical pairing between $P$ and $\varepsilon P^{*}$.

Lie bialgebra structures which fall in this class are the elements $s\in P\wedge P$ satisfying $\mathrm{CYB}(s)=0$. Such elements are in a one-to-one correspondence 
with finite-dimensional quasi-Frobenius Lie subalgebras of $P$.
\vspace{0.5cm}

\textbf{Case 2.} Let us consider the co-bracket $\delta_2$ given by 
\begin{equation}
\delta_2(p(u))=[r_2(u,v),p(u)\otimes 1+1\otimes p(v)],
\end{equation}
where $r_2(u,v)=\Omega/(u-v)$. Here $\Omega$ denotes the quadratic Casimir element on $\mathfrak{g}$. 

It was proved in \cite{S1} that the associated classical double is $D_2(P)=\mathfrak{g}((u^{-1}))$, together with the canonical invariant form 
\begin{equation}\label{eq1}
Q(f(u),g(u))=Res_{u=0}K(f,g),
\end{equation}
where $K$ denotes the Killing form of the Lie algebra $\mathfrak{g}((u^{-1}))$ over $\mathbb{C}((u^{-1}))$. 

Moreover, the Lie bialgebra structures which are obtained by twisting $\delta_2$ are in a one-to-one correspondence 
with so-called \textit{rational solutions} of the CYBE, according to \cite{S1}.
\vspace{0.5cm}

\textbf{Case 3.} In this case, let us consider the Lie bialgebra structure given by 
\begin{equation}\label{Delta_3}
\delta_3(p(u))=[r_3(u,v),p(u)\otimes 1+1\otimes p(v)],
\end{equation}
with $r_3(u,v)=v\Omega/(u-v) +\Sigma_{\alpha} e_{\alpha}\otimes f_{\alpha}+{\frac{1}{2}}\Omega_0$,
where ${e_{\alpha},  f_{\alpha}}$ are root vectors of $\mathfrak{g}$ and $\Omega_0$ is the Cartan part
of  $\Omega$.

It was proved in \cite{KPST} that the associated classical double is 
$D_3(P)=\mathfrak{g}((u^{-1}))\times\mathfrak{g}$, together with the invariant nondegenerate form $Q$ defined by 
\begin{equation}\label{eq2}
Q((f(u),a),(g(u),b))=K(f(u),g(u))_0-K(a,b),
\end{equation}
where the index zero means that one takes the free term in the series expansion. 
According to \cite{KPST}, there is a one-to-one correspondence between Lie bialgebra structures which are obtained by twisting 
$\delta_3$ and so-called \textit{quasi-trigonometric solutions} of the CYBE.

\vspace{0.5cm}

\textbf{Case 4.} We consider the co-bracket on $P$ given by 
\begin{equation}
\delta_4(p(u))=[r_4(u,v),p(u)\otimes 1+1\otimes p(v)],
\end{equation}
with $r_4(u,v)=uv\Omega/(v-u)$.

It was shown in \cite{SY} that the classical double associated to the Lie bialgebra structure $\delta_4$ is $D_4(P)=\mathfrak{g}((u^{-1}))\times(\mathfrak{g}\otimes
\mathbb{C}[\varepsilon])$, where $\varepsilon^2=0$. The form $Q$ is described as follows: if $f(u)=\sum_{-\infty}^{N} a_ku^k$ and
$g(u)=\sum_{-\infty}^{N}b_ku^k$, then 
\[
Q(f(u)+A_0+A_1\varepsilon,g(u)+B_0+B_1\varepsilon)=
Res_{u=0}u^{-2}K(f,g)-K(A_0,B_1)-K(A_1,B_0).
\]

Lie bialgebra structures which are in the same class as $\delta_4$ are in a one-to-one correspondence with \textit{quasi-rational r-matrices}, as it was proved in \cite{SY}. 

Regarding the quantization of these Lie bialgebra structures on $P$, the following conjecture stated in \cite{KPST} and proved by G. Halbout in \cite{H} plays
a crucial role.
\begin{thm} \label{Hal} Any classical twist can be extended to a quantum twist, i.e., 
if $(L,\delta)$ is any Lie bialgebra, $s$ is a classical twist, and $(A,\Delta,\varepsilon)$ is a quantization of $(L,\delta)$, there exists 
$F\in A\otimes A$ such that 

(1) $F=1+O(\hbar)$ and $F-F^{21}=\hbar s+O(\hbar^2)$,

(2) $(\Delta\otimes \mathrm{id})(F)F^{12}-(\mathrm{id}\otimes \Delta)(F)F^{23}=0$,

(3) $(\varepsilon\otimes \mathrm{id})(F)=(\mathrm{id} \otimes \varepsilon)(F)=1$.

Moreover gauge equivalence classes of quantum twists for $A$ are in bijection
with gauge equivalence classes of $\hbar$-dependent classical twists $s_\hbar=\hbar s_1+O(\hbar^2)$ for $L$. 

\end{thm}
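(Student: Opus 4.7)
The plan is to construct $F$ inductively in powers of $\hbar$. Setting $F_0=1$ and $F_1=\tfrac{1}{2}s$, conditions~(1) and~(3) of the theorem hold to first order automatically (using that $s$ is antisymmetric). Assuming $F^{(n)}=\sum_{k=0}^{n}\hbar^k F_k$ satisfies the cocycle equation~(2) modulo~$\hbar^{n+1}$, I would seek $F_{n+1}\in A\otimes A$ such that $F^{(n+1)}=F^{(n)}+\hbar^{n+1}F_{n+1}$ satisfies~(2) modulo~$\hbar^{n+2}$. A direct expansion reduces this to an equation of the form $d(F_{n+1})=\Phi_n$, where $d$ is the coboundary of the Chevalley--Eilenberg--Hochschild bicomplex governing deformations of the bialgebra structure on $A$, and $\Phi_n$ is an explicit cocycle built from $F_1,\dots,F_n$ and the higher-order terms of the comultiplication of $A$.

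At order $n=1$ the cocycle $\Phi_1$ is precisely $\mathrm{CYB}(s)+\mathrm{Alt}((\delta\otimes\mathrm{id})(s))$, which vanishes by the hypothesis that $s$ is a classical twist. For $n\geq 2$ the decisive input is a formality/vanishing statement for the Lie bialgebra deformation complex. Equivalently, one can invoke functoriality of the Etingof--Kazhdan quantization functor: the twist $s$ produces an isomorphism between the classical doubles of $(L,\delta)$ and $(L,\delta^s)$, and applying the functor yields a Hopf algebra isomorphism whose comparison with the identity of $A$ produces the desired $F$ all at once, bypassing the recursion entirely.

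For the bijection of gauge-equivalence classes I would run a parallel inductive argument. On the quantum side a gauge transformation acts as $F\mapsto (g\otimes g)\,F\,\Delta(g)^{-1}$ for invertible $g\in A$ with $\varepsilon(g)=1$; on the classical side the induced action on $\hbar$-dependent twists $s_\hbar=\hbar s_1+O(\hbar^2)$ is by inner symmetries from $L[[\hbar]]$. Checking that each inductive step depends only on $s_\hbar$ modulo~$\hbar^{n+1}$ and only on $F$ modulo gauge shows that the correspondence descends to a well-defined map on equivalence classes, and an inverse induction (reconstructing $s_\hbar$ from $F$ by reading off the antisymmetric parts of $F_k-F_k^{21}$) yields bijectivity.

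The main obstacle is the vanishing of the obstructions $\Phi_n$ for $n\geq 2$. The first-order identity follows directly from the defining equation of a classical twist, but the higher orders are genuinely deeper and demand honest cohomological input. This is essentially the content of \cite{H}, where a formality theorem for the prop governing Lie bialgebras collapses the infinite tower of obstructions to the single classical identity $\mathrm{CYB}(s)+\mathrm{Alt}((\delta\otimes\mathrm{id})(s))=0$; without such formality one would be forced to verify exactness of each $\Phi_n$ by hand, which is not feasible.
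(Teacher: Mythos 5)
The paper does not actually prove this statement: Theorem~\ref{Hal} is imported wholesale as a result conjectured in \cite{KPST} and proved by Halbout in \cite{H}, so there is no internal proof to compare yours against. Judged on its own terms, your outline correctly reproduces the standard deformation-theoretic skeleton --- order-by-order construction of $F$, obstructions $\Phi_n$ in the complex governing twists, identification of $\Phi_1$ with $\mathrm{CYB}(s)+\mathrm{Alt}((\delta\otimes\mathrm{id})(s))$ --- and you correctly locate the real difficulty. But that is also where the gap is: for $n\ge 2$ the obstruction group does not vanish for general cohomological reasons, and your argument at that point consists of invoking ``a formality/vanishing statement,'' which is not a step of a proof but a restatement of the theorem being proved; the formality theorem for the properad of Lie bialgebras is the entire content of \cite{H}. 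The alternative route you sketch via functoriality of Etingof--Kazhdan quantization is likewise not carried out: a classical twist identifies the doubles $D_\delta(L)$ and $D_{\delta^s}(L)$ compatibly with the embedding of $L$ but not with the complementary Lagrangian, so extracting a single element $F\in A\otimes A$ satisfying (1)--(3) from that isomorphism requires a genuine argument, not a one-line appeal to functoriality.

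The second half of the claim --- the bijection between gauge classes of quantum twists and gauge classes of $\hbar$-dependent classical twists --- has the same status. Your plan to show that each inductive step ``depends only on $s_\hbar$ modulo $\hbar^{n+1}$ and only on $F$ modulo gauge'' presupposes that the choices made at each order can be organized coherently, i.e.\ that the obstruction theory is governed by an $L_\infty$-quasi-isomorphism between the classical and quantum deformation complexes; reading off $s_\hbar$ from the antisymmetric parts of $F_k-F_k^{21}$ does not obviously produce a gauge-invariant of $F$ beyond first order. In short: your proposal is an accurate map of where the proof lives, but both of its decisive steps are deferred to \cite{H} rather than established, so it should be presented as a reduction to Halbout's formality theorem, exactly as the paper itself does.
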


Let us suppose that we have a Lie bialgebra structure $\delta$ on $P$. 
Then $\delta$ is obtained by twisting one of the four structures $\delta_i$ from 
Cases 1--4. This above theorem implies that in order to find a quantization for $(P,\delta)$, 
it is sufficient to determine the quantization of $\delta_i$ and then find the quantum twist whose classical 
limit is $s$. Let us note that the quantization of $(P, \delta_3)$ is well-known. 
The corresponding quantum algebra was introduced by V. Tolstoy in \cite{tol}
and it is denoted by $U_q  (\mathfrak{g}[u])$.
 
The quasi-trigonometric solutions of the CYBE were
studied in \cite{KPSST}, where it was proved that they fall into classes, which are in a one-to-one correspondence
with vertices of the extended Dynkin diagram of $\mathfrak{g}$. Let us consider corresponding roots, namely simple roots
$\alpha_1,\alpha_2,\cdots \alpha_r$ and $\alpha_0=-\alpha_{\rm{max}}$.
In \cite{KPSST}
quasi-trigonometric solutions corresponding to the simple roots which have coefficient one in the decomposition of the
maximal root
were classified. It was also proved there that quasi-trigonometric solutions corresponding to
$\alpha_0$ are in a one-to-one correspondence with constant solutions of the modified CYBE
classified in \cite{BD} and the polynomial part of these solutions is constant. 
The aim of our paper is to obtain a 
complete classification of quasi-trigonometric solutions of the CYBE. In particular,
we describe all the quasi-trigonometric solutions with non-trivial polynomial
part for  $\mathfrak{g}=o(5)$.

\section{Lie bialgebra structures associated with quasi-trigonometric solutions}\begin{defn}
A solution $X$ of the CYBE is called \emph{quasi-trigonometric} if
it is of the form $X(u,v)=v\Omega/(u-v)+p(u,v)$, where $p$ is a
polynomial with coefficients in $\mathfrak{g}\otimes\mathfrak{g}$.
\end{defn}
The class of quasi-trigonometric solutions is closed under 
\emph{gauge transformations}. We first need to introduce the following notation: 
Let $R$ be a commutative ring and let $L$ be a Lie algebra over $R$. Let us denote by
$\mathrm{Aut}_R (L)$ the group of automorphisms of $L$ over $R$. In other words
we consider such automorphisms of $L$, which satisfy the condition $f(rl)=rf(l)$, where
$r\in R,\ l\in L.$ 

At this point we note that there exists a natural embedding
$$\mathrm{Aut}_{\mathbb{C}[u]}(\mathfrak{g}[u])\hookrightarrow\mathrm{Aut}_{\mathbb{C}((u^{-1}))}(\mathfrak{g}((u^{-1}))),$$
defined by the formula
$$\sigma(u^{-k}x)=u^{-k}\sigma(x)\,,$$
for any $\sigma\in\mathrm{Aut}_{\mathbb{C}[u]}(\mathfrak{g}[u])$ and $x\in\mathfrak{g}[u]$. 

 Now if $X$ is a
quasi-trigonometric solution and 
$\sigma(u)\in \mathrm{Aut}_{\mathbb{C}[u]}(\mathfrak{g}[u])$, one can check that the function $Y(u,v):=(\sigma(u)\otimes\sigma(v))(X(u,v))$ is
again a quasi-trigonometric solution. $X$ and $Y$ are said to be
\emph{gauge equivalent}.

\begin{thm}
There exists a natural one-to-one correspondence between
quasi--trigonometric solutions of CYBE for $\mathfrak{g}$ and
linear subspaces $W$ of $\mathfrak{g}((u^{-1}))\times\mathfrak{g}$
which satisfy the following properties:

(1) $W$ is a Lie subalgebra of
$\mathfrak{g}((u^{-1}))\times\mathfrak{g}$ and $W\supseteq
u^{-N}\mathfrak{g}[[u^{-1}]]$ for some positive integer $N$.

(2)
$W\oplus\mathfrak{g}[u]=\mathfrak{g}((u^{-1}))\times\mathfrak{g}$.

(3) $W$ is a Lagrangian subspace of
$\mathfrak{g}((u^{-1}))\times\mathfrak{g}$ with respect to the
invariant bilinear form $Q$ given by (\ref{eq2}).
\end{thm}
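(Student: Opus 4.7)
The plan is to apply the general Drinfeld correspondence between classical twists of a Lie bialgebra $(L,\delta)$ and Lagrangian Lie subalgebra complements of $L$ inside its classical double. We take $(L,\delta)=(\mathfrak{g}[u],\delta_3)$, so the double is $(D_3(P),Q)=(\mathfrak{g}((u^{-1}))\times\mathfrak{g},Q)$, and $\mathfrak{g}[u]$ embeds as a Lagrangian Lie subalgebra via $p(u)\mapsto(p(u),p(0))$: the constant term of $K(p,q)$ for polynomial $p,q$ is precisely $K(p(0),q(0))$, which cancels the second summand in the formula (\ref{eq2}). Any quasi-trigonometric $X=r_3+s$ produces $s\in\mathfrak{g}[u]\wedge\mathfrak{g}[u]$, and the CYBE for $X$, together with $\mathrm{CYB}(r_3)=0$, reduces to the classical twist equation for $s$ with respect to $\delta_3$.

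Given such an $s$, I would construct $W$ as the graph modification of the canonical Lagrangian complement $W_0$ of $\mathfrak{g}[u]$ determined by $r_3$: realizing $s$ as a skew map $\mathfrak{g}[u]^{*}\to\mathfrak{g}[u]$ via the pairing $Q$ (which identifies $W_0\simeq\mathfrak{g}[u]^{*}$), one sets $W:=\{w+s(w):w\in W_0\}$. Skew-symmetry of $s$ yields isotropy, the twist equation yields closure under the bracket, and transversality $W\oplus\mathfrak{g}[u]=D_3(P)$ is automatic from the construction; this produces conditions (2), (3) and the Lie-subalgebra part of~(1). For the converse direction, given $W$ satisfying (1)--(3), transversality (2) furnishes a unique linear map $\mathfrak{g}[u]^{*}\to\mathfrak{g}[u]$ whose graph (measured from $W_0$) is~$W$; condition (3) forces it to be skew, the Lie-subalgebra property of $W$ forces the twist equation, and one sets $X:=r_3+s$.

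The main technical point, specific to the quasi-trigonometric (as opposed to merely formal) setting, is matching the inclusion $W\supseteq u^{-N}\mathfrak{g}[[u^{-1}]]$ with polynomiality of $s$. One direction is clean: a polynomial twist $s$ of degree $\le N$ annihilates $u^{-N-1}\mathfrak{g}[[u^{-1}]]$ under the pairing induced by $Q$, so deep negative powers pass unmodified from $W_0$ into $W$. The converse, extracting polynomiality of $s$ from the inclusion, is where I expect the main obstacle: it requires a finite-dimensional reduction in which one pairs the quotient $W/u^{-N}\mathfrak{g}[[u^{-1}]]$ against $\mathfrak{g}[u]/u^{N+1}\mathfrak{g}[u]$ using $Q$, and reads off that the induced $s$ is supported in only finitely many powers of $u$ and $v$. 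This uses the precise structure of $W_0$. Finally, equivariance of the entire construction under $\mathrm{Aut}_{\mathbb{C}[u]}(\mathfrak{g}[u])$ is functorial, so gauge equivalence on $X$ is transported to the natural action on the corresponding $W$.
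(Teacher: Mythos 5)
The paper itself gives no proof of this theorem: it is stated and deferred to the reference [KPST] (and the analogous rational-case argument in [S1]). So there is nothing in the text to compare against line by line; what I can say is that your strategy --- realize $\mathfrak{g}[u]$ as the Lagrangian subalgebra $p(u)\mapsto(p(u),p(0))$ of the double $(\mathfrak{g}((u^{-1}))\times\mathfrak{g},Q)$, identify quasi-trigonometric solutions with classical twists $s$ of $\delta_3$, and pass between $s$ and the deformed Lagrangian complement $W$ via the graph construction over the canonical complement $W_0$ determined by $r_3$ --- is exactly the mechanism used in those references, and your computation that $K(p,q)_0=K(p(0),q(0))$ correctly verifies isotropy of the embedded $\mathfrak{g}[u]$.

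Two points keep this from being a proof rather than a correct outline. First, you write ``any quasi-trigonometric $X=r_3+s$ produces $s\in\mathfrak{g}[u]\wedge\mathfrak{g}[u]$,'' but skew-symmetry of $s$ is not part of the definition: a quasi-trigonometric solution is only required to be $v\Omega/(u-v)+p(u,v)$ with $p$ polynomial. To get $s=X-r_3$ skew you must first prove unitarity, i.e.\ that $p(u,v)+p^{21}(v,u)=\Omega$ is forced by the CYBE (the usual route: the symmetric part of $X(u,v)+X^{21}(v,u)$ is an invariant polynomial $2$-tensor, hence a scalar polynomial multiple of $\Omega$, and the residue structure pins it down). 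Without this, the reduction of the CYBE for $X$ to the twist equation for $s$ does not even start. Second, you explicitly flag but do not carry out the converse half of the equivalence between polynomiality of $s$ and the condition $W\supseteq u^{-N}\mathfrak{g}[[u^{-1}]]$; this finite-dimensional reduction, together with an explicit description of $W_0$ (which you never write down, and which is needed even for your ``clean'' direction, since you must know $W_0\supseteq u^{-M}\mathfrak{g}[[u^{-1}]]$ for some $M$), is precisely the content-bearing part of the theorem as opposed to the general double formalism. As submitted, the proposal is a faithful roadmap of the standard argument with its hardest steps acknowledged but open.
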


Let $\sigma(u)\in \mathrm{Aut}_{\mathbb{C}[u]}(\mathfrak{g}[u])$.
Let $\widetilde{\sigma}(u)=\sigma(u)\oplus \sigma(0)$ be the induced automorphism of $\mathfrak{g}((u^{-1}))\times\mathfrak{g}$.

\begin{defn}
We will say that $W_{1}$ and $W_{2}$ are \emph{gauge equivalent} if  there exists
$\sigma(u)\in \mathrm{Aut}_{\mathbb{C}[u]}(\mathfrak{g}[u])$ such that $W_{1}=\widetilde{\sigma}(u)W_{2}$.
\end{defn}
It was checked in \cite{KPST} that two quasi-trigonometric solutions are gauge equivalent if and only if the corresponding subalgebras are gauge equivalent.

Let $\mathfrak{h}$ be a Cartan subalgebra of $\mathfrak{g}$ with
 the corresponding set of roots $R$ and a choice of simple
 roots $\Gamma$. Denote by $\mathfrak{g}_{\alpha}$ the root space
 corresponding to a root $\alpha$. Let $\mathfrak{h}(\mathbb{R})$
 be the set of all $h\in\mathfrak{h}$ such that
 $\alpha(h)\in\mathbb{R}$ for all $\alpha\in R$.
 Consider the valuation on $\mathbb{C}((u^{-1}))$ defined by
 $v(\sum_{k\geq n}a_{k}u^{-k})=n$. For any root $\alpha$ and any
 $h\in\mathfrak{h}(\mathbb{R})$, set
 $M_{\alpha}(h)$:=$\{f\in\mathbb{C}((u^{-1})):v(f)\geq \alpha(h)\}$.
 Consider
\begin{equation}
 \mathbb{O}_{h}:=\mathfrak{h}[[u^{-1}]]\oplus(\oplus_{\alpha\in R}M_{\alpha}(h)\otimes\mathfrak{g}_{\alpha}).
\end{equation}

As it was shown in \cite{KPSST}, any maximal order $W$ which corresponds to a quasi-trigonometric solution of the CYBE, can be embedded (up to some gauge equivalence) into $\mathbb{O}_{h}\times\mathfrak{g}$. Moreover $h$ may be taken as a vertex of the standard simplex 
$\Delta_{st}= \{h\in\mathfrak{h}(\mathbb{R}):$ $\alpha(h)\geq 0$ for
all $\alpha\in\Gamma$ and $\alpha_{\max}\leq 1\}$.

Vertices of the above simplex correspond to vertices of the extended Dynkin diagram of
$\mathfrak{g}$, the correspondence being given by the following rule:
\[0\leftrightarrow\alpha_{\max}\]\[h_{i}\leftrightarrow\alpha_{i},\]
where $\alpha_{i}(h_{j})=\delta_{ij}/k_{j}$ and $k_{j}$ are given by the relation $\sum
k_{j}\alpha_{j}=\alpha_{\max}$. We will write $\mathbb{O}_{\alpha}$ instead of
$\mathbb{O}_{h}$ if $\alpha$ is the root which corresponds to the vertex $h$.

By straightforward computations, one can check the following two results:

\begin{lem}\label{Lemma1}
Let $R$ be the set of all roots and $\alpha$ an arbitrary simple root. Let $k$ be the coefficient of $\alpha$ in the decomposition of $\alpha_{\max}$. 

For each $r$, $-k\leq r\leq k$, let $R_{r}$ denote the set of all roots which contain $\alpha$ with coefficient $r$. Let $\mathfrak{g}_{0}=\mathfrak{h}\oplus\sum_{\beta\in R_{0}}\mathfrak{g}_\beta$ and $\mathfrak{g}_r=\sum_{\beta\in R_r}\mathfrak{g}_\beta$. Then
\begin{equation}
\mathbb{O}_\alpha=\sum_{r=1}^k u^{-1}\mathbb{O}\mathfrak{g}_r+
\sum_{r=1-k}^0 \mathbb{O}\mathfrak{g}_r+u\mathbb{O}\mathfrak{g}_{-k},
\end{equation}
where $\mathbb{O}:=\mathbb{C}[[u^{-1}]]$. 
\end{lem}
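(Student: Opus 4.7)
The plan is to unpack the definition of $\mathbb{O}_h$ at the specific vertex $h$ corresponding to the simple root $\alpha$, and then identify the module $M_\beta(h)$ for every root $\beta$ according to which layer $R_r$ the root $\beta$ lies in.

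First, I would recall from the correspondence $h_j \leftrightarrow \alpha_j$ with $\alpha_i(h_j)=\delta_{ij}/k_j$ that the vertex $h$ associated with $\alpha$ satisfies $\alpha(h)=1/k$ and $\alpha_i(h)=0$ for every other simple root $\alpha_i$. Consequently, for any root $\beta=\sum_i c_i\alpha_i$ with $c_\alpha=r$, one has $\beta(h)=r/k$. Since the valuation $v$ takes integer values, the condition $v(f)\geq r/k$ is equivalent to $v(f)\geq \lceil r/k\rceil$, so I only need to evaluate this ceiling for $-k\leq r\leq k$.

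Next, the case analysis is immediate: for $1\leq r\leq k$ we have $0< r/k\leq 1$, hence $\lceil r/k\rceil = 1$ and $M_\beta(h)=u^{-1}\mathbb{O}$; for $1-k\leq r\leq 0$ we have $-1<r/k\leq 0$, hence $\lceil r/k\rceil=0$ and $M_\beta(h)=\mathbb{O}$; and for $r=-k$ we have $r/k=-1$, hence $\lceil r/k\rceil=-1$ and $M_\beta(h)=u\mathbb{O}$. Collecting all root spaces $\mathfrak{g}_\beta$ with a fixed value of $r$ into $\mathfrak{g}_r$ (and, for $r=0$, adjoining the Cartan contribution $\mathfrak{h}[[u^{-1}]]=\mathbb{O}\mathfrak{h}$ to complete $\mathfrak{g}_0=\mathfrak{h}\oplus\sum_{\beta\in R_0}\mathfrak{g}_\beta$) immediately yields the stated decomposition
\[
\mathbb{O}_\alpha=\sum_{r=1}^{k}u^{-1}\mathbb{O}\mathfrak{g}_r+\sum_{r=1-k}^{0}\mathbb{O}\mathfrak{g}_r+u\mathbb{O}\mathfrak{g}_{-k}.
\]

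There is no real obstacle here; the only thing that could trip one up is the handling of the boundary cases $r=k$ and $r=-k$, where one must remember that the valuation is integer-valued so that a fractional bound like $r/k=1$ or $r/k=-1$ is saturated rather than rounded strictly up. Since the definition of $\mathbb{O}_\alpha$ is a direct sum over all roots together with the Cartan part, and the modules $u^{-1}\mathbb{O}$, $\mathbb{O}$, $u\mathbb{O}$ are precisely what the three ranges of $r$ produce, the lemma follows by a straightforward bookkeeping of these three contributions.
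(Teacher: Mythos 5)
Your proof is correct and is precisely the ``straightforward computation'' that the paper leaves to the reader: unwinding the definition of $\mathbb{O}_h$ at the vertex $h$ with $\alpha(h)=1/k$ and all other simple roots vanishing on $h$, so that $M_\beta(h)$ depends only on $\lceil r/k\rceil\in\{-1,0,1\}$. The handling of the integer-valued valuation and the boundary cases $r=\pm k$, as well as the absorption of $\mathfrak{h}[[u^{-1}]]$ into the $\mathbb{O}\mathfrak{g}_0$ term, are all as intended.
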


\begin{lem} Let $\alpha$ be a simple root and $k$ its coefficient in the decomposition of $\alpha_{\max}$. Let $\Delta_\alpha$ denote the set of all pairs $(a,b)$, $a\in\mathfrak{g}_0+\mathfrak{g}_{-k}$, $b\in\mathfrak{g}_0+\mathfrak{g}_{-1}+...+\mathfrak{g}_{-k}$, $a=a_0+a_{-k}$, $b=b_0+b_{-1}+...+b_{-k}$ and $ a_0=b_0$. Then

(i) The orthogonal complement of $\mathbb{O}_\alpha\times\mathfrak{g}$ with respect to $Q$ is given by 
\begin{equation}
(\mathbb{O}_\alpha\times\mathfrak{g})^\perp=
\sum_{r=-k}^{-1}\mathbb{O}\mathfrak{g}_r+
\sum_{r=0}^{k-1}u^{-1}\mathbb{O}\mathfrak{g}_r+
u^{-2}\mathbb{O}\mathfrak{g}_k.
\end{equation}

(ii) There exists an isomorphism $\sigma$ 
\begin{equation}
\frac{\mathbb{O}_\alpha\times\mathfrak{g}}{(\mathbb{O}_\alpha\times\mathfrak{g})^\perp}\cong(\mathfrak{g}_k\oplus\mathfrak{g}_0\oplus\mathfrak{g}_{-k})\times\mathfrak{g}
\end{equation}
given by 
\[\sigma((f,a)+(\mathbb{O}_\alpha\times\mathfrak{g})^\perp)=(a_0+b_0+c_0,a),\]
where the element $f\in\mathbb{O}_\alpha$ is decomposed according to Lemma \ref{Lemma1}:
\[f=u^{-1}(a_0+a_1u^{-1}+...)+(b_0+b_1u^{-1}+...)+u(c_0+c_1u^{-1}+...)+...,\] 
$a_i\in\mathfrak{g}_k$, $b_i\in\mathfrak{g}_0$, $c_i\in\mathfrak{g}_{-k}$
and $a\in\mathfrak{g}$. 

(iii) $(\mathbb{O}_{\alpha}\times\mathfrak{g})\cap\mathfrak{g}[u]$ is sent via the isomorphism $\sigma$ to $\Delta_\alpha$. 
\end{lem}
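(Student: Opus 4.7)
The plan is to prove the three parts in order, relying throughout on the root-graded decomposition of $\mathbb{O}_\alpha$ from Lemma~\ref{Lemma1}, the decomposition $\mathfrak{g} = \bigoplus_{r=-k}^k \mathfrak{g}_r$, and the fact that the Killing form $K$ of $\mathfrak{g}$ vanishes on $\mathfrak{g}_r\times\mathfrak{g}_s$ unless $r+s = 0$.

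For part (i), I would split the orthogonality condition $Q((f,a),(g,b)) = K(f,g)_0 - K(a,b) = 0$ into two parts. Setting $f=0$ and letting $a$ range over $\mathfrak{g}$ forces $b=0$ by nondegeneracy of $K|_\mathfrak{g}$. Setting $a=0$ leaves $K(f,g)_0 = 0$ for all $f\in\mathbb{O}_\alpha$. Expanding $f = \sum_{m,r} u^{-m}f_{m,r}$ and $g = \sum_{m,r} u^{-m}g_{m,r}$ with $f_{m,r},g_{m,r}\in\mathfrak{g}_r$, one obtains $K(f,g)_0 = \sum_{m,r} K(f_{m,r}, g_{-m,-r})$, and for each fixed $r$ the indices $m$ for which $f_{m,r}$ is unconstrained are precisely those displayed in Lemma~\ref{Lemma1}. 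Requiring the corresponding $g_{-m,-r}$ to vanish, and then reindexing $-r \mapsto r'$, produces the three sums in the claimed formula: $r\in[1,k]$ yields $\mathbb{O}\mathfrak{g}_{-r}$; $r\in[1-k,0]$ yields $u^{-1}\mathbb{O}\mathfrak{g}_{-r}$; and $r=-k$ yields $u^{-2}\mathbb{O}\mathfrak{g}_k$.

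For part (ii), I would first verify gradewise that the complement computed in (i) is contained in $\mathbb{O}_\alpha\times\mathfrak{g}$, so the quotient makes sense. Then the quotient is computed one grade at a time: only $r = k$, $r = 0$, and $r = -k$ contribute nontrivially, giving respectively $u^{-1}\mathbb{O}\mathfrak{g}_k/u^{-2}\mathbb{O}\mathfrak{g}_k \cong \mathfrak{g}_k$ via the $u^{-1}$-coefficient, $\mathbb{O}\mathfrak{g}_0/u^{-1}\mathbb{O}\mathfrak{g}_0 \cong \mathfrak{g}_0$ via the constant term, and $u\mathbb{O}\mathfrak{g}_{-k}/\mathbb{O}\mathfrak{g}_{-k} \cong \mathfrak{g}_{-k}$ via the $u$-coefficient. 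The $\mathfrak{g}$-factor is untouched since the complement vanishes there. Summing gives $(\mathfrak{g}_k\oplus\mathfrak{g}_0\oplus\mathfrak{g}_{-k})\times\mathfrak{g}$, and the map $\sigma$ in the statement is exactly the collection of these three leading coefficients, which is thus well-defined and bijective.

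For part (iii), I intersect $\mathbb{O}_\alpha$ with $\mathbb{C}[u]\otimes\mathfrak{g}$ grade by grade. For $r\in[1,k]$ the component must lie in $u^{-1}\mathbb{C}[[u^{-1}]]\cap\mathbb{C}[u] = 0$; for $r\in[1-k,0]$ the component must be a constant $p_r\in\mathfrak{g}_r$; for $r=-k$ the component has the form $c_1 + c_0 u$ with $c_0,c_1\in\mathfrak{g}_{-k}$. Under the embedding $p(u)\mapsto(p(u),p(0))$ of $\mathfrak{g}[u]$ into $\mathfrak{g}((u^{-1}))\times\mathfrak{g}$, applying $\sigma$ produces the pair $a = p_0 + c_0 \in \mathfrak{g}_0 + \mathfrak{g}_{-k}$ and $b = p(0) = p_0 + p_{-1}+\cdots+p_{1-k}+c_1 \in \mathfrak{g}_0 + \mathfrak{g}_{-1}+\cdots+\mathfrak{g}_{-k}$, which share their $\mathfrak{g}_0$-part $p_0$, so $a_0 = b_0$ and the image lies in $\Delta_\alpha$. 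Surjectivity is immediate by reversing the construction. The main obstacle is really just bookkeeping, in particular the special treatment of $r=-k$ due to the $u\mathbb{O}$ summand in Lemma~\ref{Lemma1}, which is responsible both for the $\mathfrak{g}_{-k}$ factor in (ii) (via $c_0$) and the additional parameter $c_1 = b_{-k}$ in (iii).
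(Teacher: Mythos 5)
Your proof is correct, and it is precisely the ``straightforward computation'' the paper alludes to without writing out: a gradewise analysis using $K(\mathfrak{g}_r,\mathfrak{g}_s)=0$ for $r+s\neq 0$ and the valuation bounds from Lemma~\ref{Lemma1}, together with the embedding $p(u)\mapsto (p(u),p(0))$ of $\mathfrak{g}[u]$, which you correctly identify even though the paper leaves it implicit. No gaps; the bookkeeping for the $r=-k$ summand $u\mathbb{O}\mathfrak{g}_{-k}$, which produces both $c_0$ in (ii) and the extra parameter $b_{-k}$ in (iii), is handled exactly as intended.
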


Let us make an important remark. The Lie subalgebra $\mathfrak{g}_k+\mathfrak{g}_0+\mathfrak{g}_{-k}$ of $\mathfrak{g}$ 
coincides with the semisimple Lie algebra whose Dynkin diagram is obtained from the extended Dynkin diagram of $\mathfrak{g}$ by 
crossing out $\alpha$. Let us denote this subalgebra by $L_\alpha$.
The Lie algebra $L_\alpha\times\mathfrak{g}$ is endowed with the following invariant bilinear form:
\begin{equation}
Q'((a,b),(c,d))=K(a,c)-K(b,d),
\end{equation}
for any $a,c\in L_\alpha$ and $b,d\in\mathfrak{g}$. 

On the other hand, $\mathfrak{g}_0+\mathfrak{g}_{-k}$ is the parabolic subalgebra $P_{-\alpha_{\max}}^{+}$ of $L_\alpha$ which corresponds to $-\alpha_{\max}$.
The Lie subalgebra $\mathfrak{g}_0+\mathfrak{g}_{-1}+...+\mathfrak{g}_{-k}$ is the parabolic subalgebra $P_{\alpha}^{-}$ of $\mathfrak{g}$ which corresponds to the root $\alpha$ and contains the negative Borel subalgebra. Let us also note that $\mathfrak{g}_0$ is precisely the reductive part of $P_{\alpha}^{-}$ and of $P_{-\alpha_{\max}}^{+}$. We can conclude that the set $\Delta_{\alpha}$ consists of all pairs $(a,b)\in P_{-\alpha_{\max}}^{+}\times P_{\alpha}^{-}$ whose reductive parts are equal.

\begin{thm}

Let $\alpha$ be a simple root. There is a one-to-one correspondence between Lagrangian subalgebras $W$ of $\mathfrak{g}((u^{-1}))\times\mathfrak{g}$ which are contained in $\mathbb{O}_\alpha\times\mathfrak{g}$ and transversal to $\mathfrak{g}[u]$, and Lagrangian subalgebras $\mathfrak{l}$ of $L_{\alpha}\times\mathfrak{g}$ transversal to $\Delta_\alpha$ (with respect to the bilinear form $Q'$). 
\end{thm}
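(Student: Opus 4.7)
The plan is to set up the correspondence via the coisotropic reduction of $\mathbb{O}_\alpha\times\mathfrak{g}$ by its orthogonal $(\mathbb{O}_\alpha\times\mathfrak{g})^\perp$, which by the preceding lemma is identified with $L_\alpha\times\mathfrak{g}$ through the isomorphism $\sigma$. Let $\pi\colon\mathbb{O}_\alpha\times\mathfrak{g}\to L_\alpha\times\mathfrak{g}$ denote the canonical projection composed with $\sigma$. The forward map will be $W\mapsto\mathfrak{l}:=\pi(W)$ and the inverse map $\mathfrak{l}\mapsto W:=\pi^{-1}(\mathfrak{l})$.

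To see the forward map is well defined, note that since $W=W^\perp\subseteq\mathbb{O}_\alpha\times\mathfrak{g}$, dualizing the inclusion gives $W\supseteq(\mathbb{O}_\alpha\times\mathfrak{g})^\perp=\ker\pi$. Moreover, $(\mathbb{O}_\alpha\times\mathfrak{g})^\perp=\mathbb{O}_\alpha^\perp\times\{0\}$ with $\mathbb{O}_\alpha^\perp$ an ideal in $\mathbb{O}_\alpha$ by invariance of the residue pairing, so the quotient inherits a Lie algebra structure and $\mathfrak{l}$ is automatically a Lie subalgebra. For the Lagrangian property, since $(\mathbb{O}_\alpha\times\mathfrak{g})^\perp$ is exactly the radical of $Q$ restricted to $\mathbb{O}_\alpha\times\mathfrak{g}$, the form $Q$ descends to a nondegenerate invariant form on the quotient, and I will verify by a direct calculation based on Lemma \ref{Lemma1} and the $\mathbb{Z}$-grading $\mathfrak{g}=\bigoplus_r\mathfrak{g}_r$ that $\sigma$ intertwines this descended form with $Q'$: the only surviving contributions to $K(f,g)_0$ are the $\mathfrak{g}_0$-against-$\mathfrak{g}_0$ pairing from the constant terms and the $\mathfrak{g}_k$-against-$\mathfrak{g}_{-k}$ pairing from the $u^{-1}$ and $u$ coefficients, matching the definition of $Q'$. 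Consequently $W=W^\perp$ descends to $\mathfrak{l}=\mathfrak{l}^{\perp Q'}$.

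For transversality, I intersect the decomposition $W\oplus\mathfrak{g}[u]=\mathfrak{g}((u^{-1}))\times\mathfrak{g}$ with $\mathbb{O}_\alpha\times\mathfrak{g}$; since $W$ already lies inside, the modular law yields $W\oplus[(\mathbb{O}_\alpha\times\mathfrak{g})\cap\mathfrak{g}[u]]=\mathbb{O}_\alpha\times\mathfrak{g}$. Applying $\pi$ and invoking part (iii) of the preceding lemma, which identifies $\pi((\mathbb{O}_\alpha\times\mathfrak{g})\cap\mathfrak{g}[u])$ with $\Delta_\alpha$, gives $\mathfrak{l}+\Delta_\alpha=L_\alpha\times\mathfrak{g}$. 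The sum stays direct thanks to the auxiliary identity $(\mathbb{O}_\alpha\times\mathfrak{g})^\perp\cap\mathfrak{g}[u]=0$: polynomials embed into $\mathfrak{g}((u^{-1}))\times\mathfrak{g}$ as $(p(u),p(0))$, so membership in $\mathbb{O}_\alpha^\perp\times\{0\}$ forces $p(0)=0$, and the explicit formula for $\mathbb{O}_\alpha^\perp$ shows its intersection with $\mathfrak{g}[u]$ consists only of constants, which must then vanish.

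For the inverse direction, any Lagrangian subalgebra $\mathfrak{l}\subseteq L_\alpha\times\mathfrak{g}$ transversal to $\Delta_\alpha$ lifts to $W:=\pi^{-1}(\mathfrak{l})$, which contains $(\mathbb{O}_\alpha\times\mathfrak{g})^\perp$ and hence contains $u^{-2}\mathfrak{g}[[u^{-1}]]\times\{0\}$; the remaining properties follow by reversing the arguments above, and the two assignments are mutually inverse by construction. The main technical obstacle will be the form-descent step: carefully confirming via the grading and the explicit form of $\mathbb{O}_\alpha$ that the residue pairing $K(\cdot,\cdot)_0$ reduces precisely to the finite-dimensional Killing form on $L_\alpha=\mathfrak{g}_k\oplus\mathfrak{g}_0\oplus\mathfrak{g}_{-k}$, since Lagrangianity in both directions depends on this identification. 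Once this is in place, the rest is essentially formal manipulation of subspaces across a coisotropic reduction.
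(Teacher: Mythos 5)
Your proposal is correct and follows essentially the same route as the paper's own (very terse) proof: pass to the quotient of $\mathbb{O}_\alpha\times\mathfrak{g}$ by its $Q$-orthogonal and transport the Lagrangian and transversality properties through the isomorphism $\sigma$ of the preceding lemma. The paper simply asserts "one can check" for the steps you spell out (the inclusion $W\supseteq(\mathbb{O}_\alpha\times\mathfrak{g})^\perp$, the descent of $Q$ to $Q'$, and the modular-law argument for transversality), so your write-up is a legitimate filling-in of the same argument rather than a different one.
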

\begin{proof}
Since $W$ is a subspace of $\mathbb{O}_{\alpha}\times\mathfrak{g}$, let $\mathfrak{l}$ be its image in $L_{\alpha}\times\mathfrak{g}$. Because $W$ is transversal to $\mathfrak{g}[u]$, one can check that $\mathfrak{l}$ is transversal to the image of $(\mathbb{O}_{\alpha}\times\mathfrak{g})\cap\mathfrak{g}[u]$ in $L_{\alpha}\times\mathfrak{g}$, which is exactly $\Delta_{\alpha}$. The fact that $W$ is Lagrangian implies that $\mathfrak{l}$ is also Lagrangian.

Conversely, if $\mathfrak{l}$ is a Lagrangian subalgebra of $L_{\alpha}\times\mathfrak{g}$ transversal to $\Delta_{\alpha}$, then its preimage $W$ in $\mathbb{O}_\alpha\times\mathfrak{g}$ is transversal to $\mathfrak{g}[u]$ and Lagrangian as well.  

\end{proof}
The Lagrangian subalgebras $\mathfrak{l}$ of $L_{\alpha}\times\mathfrak{g}$ which are transversal to $\Delta_{\alpha}$, can be determined using results of P. Delorme \cite{Del} on the classification of Manin triples. We are interested in determining Manin triples of the form $(Q',\Delta_{\alpha},\mathfrak{l})$. 

 Let us recall Delorme's construction of so-called \textit{generalized Belavin-Drinfeld data}. Let $\mathfrak{r}$ be a finite-dimensional complex, reductive, Lie algebra and $B$ a
symmetric, invariant, nondegenerate bilinear form on $\mathfrak{r}$. The goal in
\cite{Del} is to classify all Manin triples of $\mathfrak{r}$ up to conjugacy under the
action on $\mathfrak{r}$ of the simply connected Lie group $\mathcal{R}$ whose Lie
algebra is $\mathfrak{r}$.

One denotes by $\mathfrak{r}_{+}$ and $\mathfrak{r}_{-}$ respectively the sum of the
simple ideals of $\mathfrak{r}$ for which the restriction of $B$ is equal to a positive
(negative) multiple of the Killing form. Then the derived ideal of $\mathfrak{r}$ is the
sum of $\mathfrak{r}_{+}$ and $\mathfrak{r}_{-}$.

Let $\mathfrak{j}_{0}$ be a Cartan subalgebra of $\mathfrak{r}$, $\mathfrak{b}_{0}$ a
Borel subalgebra containing $\mathfrak{j}_{0}$ and $\mathfrak{b}_{0}'$ be its opposite.
Choose $\mathfrak{b}_{0}\cap\mathfrak{r}_{+}$ as Borel subalgebra of $\mathfrak{r}_{+}$
and $\mathfrak{b}_{0}'\cap\mathfrak{r}_{-}$ as Borel subalgebra of $\mathfrak{r}_{-}$.
Denote by $\Sigma_{+}$ (resp., $\Sigma_{-}$) the set of simple roots of
$\mathfrak{r}_{+}$ (resp., $\mathfrak{r}_{-}$) with respect to the above Borel
subalgebras. Let $\Sigma=\Sigma_{+}\cup\Sigma_{-}$ and denote by $\mathcal{W}=
(H_{\alpha},X_{\alpha},Y_{\alpha})_{\alpha\in\Sigma_+}$ a Weyl system of generators of
$[\mathfrak{r},\mathfrak{r}]$. 
\begin{defn}[Delorme, \cite{Del}]\label{BD data}
One calls $(A,A',\mathfrak{i}_{\mathfrak{a}},\mathfrak{i}_{\mathfrak{a}'})$
\emph{generalized Belavin-Drinfeld data} with respect to $B$ when the following five
conditions are satisfied:

(1) $A$ is a bijection from a subset $\Gamma_{+}$ of $\Sigma_{+}$ on a subset
$\Gamma_{-}$ of $\Sigma_{-}$ such that
\[
B(H_{A\alpha},H_{A\beta})=-B(H_{\alpha},H_{\beta}), \alpha, \beta\in\Gamma_{+}.
\]

(2) $A'$ is a bijection from a subset $\Gamma'_{+}$ of $\Sigma_{+}$ on a subset
$\Gamma'_{-}$ of $\Sigma_{-}$ such that
\[
B(H_{A'\alpha},H_{A'\beta})=-B(H_{\alpha},H_{\beta}),\alpha,\beta\in\Gamma'_{+}.
\]

(3) If $C=A^{-1}A'$ is the map defined on $\rm{dom}(C)=\{\alpha\in\Gamma'_{+}:
A'\alpha\in\Gamma_{-}\}$ by $C\alpha=A^{-1}A'\alpha$, then $C$ satisfies:

For all $\alpha\in \rm{dom}(C)$, there exists a positive integer $n$ such that $\alpha$,...,
$C^{n-1}\alpha\in \rm{dom}(C)$ and $C^{n}\alpha\notin \rm{dom}(C)$.

(4) $\mathfrak{i}_{\mathfrak{a}}$ (resp., $\mathfrak{i}_{\mathfrak{a}'}$) is a complex
vector subspace of $\mathfrak{j}_{0}$, included and Lagrangian in the orthogonal
$\mathfrak{a}$ (resp., $\mathfrak{a}'$) to the subspace generated by $H_{\alpha}$,
$\alpha\in\Gamma_{+}\cup\Gamma_{-}$ (resp., $\Gamma'_{+}\cup\Gamma'_{-}$).

(5) If $\mathfrak{f}$ is the subspace of $\mathfrak{j}_{0}$ generated by the family
$H_{\alpha}+H_{A\alpha}$, $\alpha\in\Gamma_{+}$, and $\mathfrak{f}'$ is defined
similarly, then
\[
(\mathfrak{f}\oplus\mathfrak{i}_{\mathfrak{a}})\cap(\mathfrak{f}'\oplus
\mathfrak{i}_{\mathfrak{a}'})={0}.
\]
\end{defn}

Let $R_{+}$ be the set of roots of $\mathfrak{j}_{0}$ in $\mathfrak{r}$ which are linear
combinations of elements of $\Gamma_{+}$. One defines similarly $R_{-}$, $R'_{+}$ and
$R'_{-}$. The bijections $A$ and $A'$ can then be extended by linearity to bijections
from $R_{+}$ to $R_{-}$ (resp., $R'_{+}$ to $R'_{-}$). If $A$ satisfies condition (1),
then there exists a unique isomorphism $\tau$ between the subalgebra $\mathfrak{m}_{+}$
of $\mathfrak{r}$ spanned by $X_{\alpha}$, $H_{\alpha}$ and $Y_{\alpha}$,
$\alpha\in\Gamma_{+}$, and the subalgebra $\mathfrak{m}_{-}$ spanned by $X_{\alpha}$,
$H_{\alpha}$ and $Y_{\alpha}$, $\alpha\in\Gamma_{-}$, such that
$\tau(H_{\alpha})=H_{A\alpha}$, $\tau(X_{\alpha})=X_{A\alpha}$,
$\tau(Y_{\alpha})=Y_{A\alpha}$ for all $\alpha\in\Gamma_{+}$. If $A'$ satisfies (2), then
one defines similarly an isomorphism $\tau'$ between $\mathfrak{m'}_{+}$ and
$\mathfrak{m'}_{-}$.

\begin{thm}[Delorme, \cite{Del}]\label{Manin}
(i) Let
$\mathcal{B}\mathcal{D}=(A,A',\mathfrak{i}_{\mathfrak{a}},\mathfrak{i}_{\mathfrak{a}'})$
be generalized Belavin-Drinfeld data, with respect to $B$. Let $\mathfrak{n}$ be the sum
of the root spaces relative to roots $\alpha$ of $\mathfrak{j}_{0}$ in
$\mathfrak{b}_{0}$, which are not in $R_{+}\cup R_{-}$. Let $\mathfrak{i}:=
\mathfrak{k}\oplus\mathfrak{i}_{\mathfrak{a}}\oplus\mathfrak{n}$, where
$\mathfrak{k}:=\{X+\tau(X):X\in\mathfrak{m}_{+}\}$.

Let $\mathfrak{n'}$ be the sum of the root spaces relative to roots $\alpha$ of
$\mathfrak{j}_{0}$ in $\mathfrak{b}_{0}'$, which are not in $R'_{+}\cup R'_{-}$. Let
$\mathfrak{i'}:=\mathfrak{k'}\oplus \mathfrak{i}_{\mathfrak{a'}}\oplus\mathfrak{n'}$,
where $\mathfrak{k'}:=\{X+\tau'(X):X\in\mathfrak{m'}_{+}\}$.

Then $(B,\mathfrak{i},\mathfrak{i}')$ is a Manin triple.

(ii) Every Manin triple is conjugate by an element of $\mathcal{R}$ to a unique Manin
triple of this type.
\end{thm}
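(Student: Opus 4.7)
My plan follows the two natural directions separately.

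For part (i), the plan is a direct verification that $\mathfrak{i}=\mathfrak{k}\oplus \mathfrak{i}_{\mathfrak{a}}\oplus\mathfrak{n}$ and its primed counterpart form a Manin triple, broken into three pieces. First, $\mathfrak{i}$ is a Lie subalgebra: $\mathfrak{k}$ is closed under bracket because it is the graph of the Lie algebra isomorphism $\tau\colon \mathfrak{m}_{+}\to \mathfrak{m}_{-}$; $\mathfrak{i}_{\mathfrak{a}}\subseteq \mathfrak{a}$ lies in the joint kernel of all roots in $\Gamma_{+}\cup\Gamma_{-}$, so it centralizes $\mathfrak{m}_{\pm}$; $\mathfrak{n}$ is the span of root spaces for a closed set of positive roots, hence a subalgebra; and the mixed bracket $[\mathfrak{k},\mathfrak{n}]\subseteq\mathfrak{n}$ because the bijection $A$ between $\Gamma_{+}$ and $\Gamma_{-}$ forces the brackets of $X+\tau(X)$ with $\mathfrak{n}$ to land again outside $R_{+}\cup R_{-}$. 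Second, isotropy: since $B$ pairs $\mathfrak{g}_{\alpha}$ with $\mathfrak{g}_{-\alpha}$, the subspace $\mathfrak{n}$ is isotropic and $B(\mathfrak{k}\oplus\mathfrak{i}_{\mathfrak{a}},\mathfrak{n})=0$; on $\mathfrak{k}$ one expands $B(X+\tau X,Y+\tau Y)=B(X,Y)+B(\tau X,\tau Y)$ using $B(\mathfrak{r}_{+},\mathfrak{r}_{-})=0$, and the right-hand side vanishes by condition (1) on the Cartan, then on all of $\mathfrak{m}_{+}$ by the standard $\mathfrak{sl}_{2}$-normalization of the Weyl generators $(H_{\alpha},X_{\alpha},Y_{\alpha})$; and $\mathfrak{i}_{\mathfrak{a}}$ is isotropic by (4). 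Third, $\mathfrak{i}\cap\mathfrak{i}'=0$: the Cartan part is exactly what condition (5) rules out, and at each root level the only potentially nontrivial overlap comes from the two graph pieces $\mathfrak{k},\mathfrak{k}'$, which is killed precisely by the finiteness of orbits in condition (3) applied to $C=A^{-1}A'$; a dimension count then upgrades this to the direct sum decomposition $\mathfrak{r}=\mathfrak{i}\oplus\mathfrak{i}'$.

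For part (ii), the substantive direction, the plan is to start with an arbitrary Manin triple $(B,\mathfrak{i},\mathfrak{i}')$ and bring it into the standard form under the action of the adjoint group $\mathcal{R}$. In order: (a) I would choose a Cartan $\mathfrak{j}_{0}$ of $\mathfrak{r}$ compatible with the given Lagrangian decomposition; (b) produce a pair of opposite Borels $\mathfrak{b}_{0},\mathfrak{b}_{0}'\supseteq\mathfrak{j}_{0}$ such that the nilpotent radicals align with the ``purely upper'' and ``purely lower'' parts of $\mathfrak{i},\mathfrak{i}'$; and (c) read off the subsets $\Gamma_{\pm},\Gamma_{\pm}'$ and the bijections $A,A'$ from the precise positions of root spaces in $\mathfrak{i}$ and $\mathfrak{i}'$, extracting $\mathfrak{i}_{\mathfrak{a}},\mathfrak{i}_{\mathfrak{a}'}$ as the remaining Cartan components. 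Once this alignment is in place, conditions (1)--(2) are forced by the isotropy of $\mathfrak{i},\mathfrak{i}'$, condition (4) by their Lagrangian nature, and (3), (5) by the transversality $\mathfrak{i}\cap\mathfrak{i}'=0$. Uniqueness up to $\mathcal{R}$-conjugacy is a bookkeeping exercise on the choices made in (a) and (b).

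The hard step is (b): in an arbitrary reductive $\mathfrak{r}$ with a given Manin triple, producing opposite Borels whose graph structure matches that induced by $\tau$ and $\tau'$ is the geometric heart of \cite{Del} and requires a careful analysis of how a Lagrangian subalgebra of a reductive Lie algebra decomposes relative to its Levi structure. Once this conjugation is achieved, everything else reduces to combinatorics on roots and linear algebra on the Cartan, and the conditions in Definition~\ref{BD data} have been engineered precisely to record the output of this analysis.
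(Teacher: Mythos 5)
This theorem is not proved in the paper at all: it is Delorme's classification of Manin triples of a complex reductive Lie algebra, imported verbatim from \cite{Del} and used as a black box. So there is no internal proof to compare your proposal against; the only question is whether your outline would itself constitute a proof, and it would not.

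For part (i), your verification plan is essentially the right one and could be completed, but as written it has gaps. The bracket $[\mathfrak{k},\mathfrak{n}]$ need not land back in $\mathfrak{n}$: an element of $\mathfrak{k}$ has components in both $\mathfrak{m}_{+}$ (roots in $R_{+}$, including negative ones from the $Y_{\alpha}$) and $\mathfrak{m}_{-}$, and bracketing with a root space of $\mathfrak{n}$ can produce roots that are negative or that recombine into $R_{+}\cup R_{-}$; what one actually shows is $[\mathfrak{k},\mathfrak{n}]\subseteq\mathfrak{i}$, using that a root of the form $\beta+\gamma$ with $\beta\in R_{\pm}$ and $\gamma\notin R_{+}\cup R_{-}$ cannot lie in $R_{+}\cup R_{-}$, together with the parabolic structure attached to $\Gamma_{+}\cup\Gamma_{-}$ to control positivity. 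Likewise, the claim that condition (3) on $C=A^{-1}A'$ ``kills'' the overlap of $\mathfrak{k}$ and $\mathfrak{k}'$ at each root level is the crux of transversality and is only asserted, not argued. For part (ii) you have named the correct strategy but explicitly left its central step unproved: producing, by conjugation, a Cartan subalgebra and a pair of opposite Borel subalgebras adapted to an arbitrary Lagrangian pair, and showing the resulting combinatorial data satisfy (1)--(5) and are unique. That step is the substance of Delorme's paper (a long inductive analysis of Lagrangian subalgebras relative to parabolic and Levi subalgebras), and deferring it means part (ii) is an outline of what must be done rather than a proof. As a reconstruction of the cited result, the proposal is a faithful road map but not a proof.
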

Let us consider the particular case 
$\mathfrak{r}=L_{\alpha}\times\mathfrak{g}$.
We set $\Sigma_{+}:=(\Gamma^{ext} \setminus \{\alpha\})\times\{0\}$,
$\Sigma_{-}:=\{0\}\times\Gamma$ and $\Sigma:=\Sigma_{+}\cup\Sigma_{-}$. 

Denote by $(X_{\gamma},Y_{\gamma}, H_{\gamma})_{\gamma\in\Gamma}$ a Weyl system of generators for $\mathfrak{g}$ with respect to the root system $\Gamma$. 
Denote $-\alpha_{\max}$ by $\alpha_0$. 
Let $H_{\alpha_0}$ be the coroot of $\alpha_0$. We choose $X_{\alpha_0}\in\mathfrak{g}_{\alpha_0}$, $Y_{\alpha_0}\in\mathfrak{g}_{-\alpha_0}$ such that $[X_{\alpha_0},Y_{\alpha_0}]=H_{\alpha_0}$. 

A Weyl system of generators in $L_{\alpha}\times\mathfrak{g}$ (with respect to the root system $\Sigma$) is the following:
$X_{(\beta,0)}=(X_\beta,0)$, $H_{(\beta,0)}=(H_\beta,0)$, $Y_{(\beta,0)}=(Y_\beta,0)$, for any $\beta\in\Gamma^{ext} \setminus \{\alpha\}$, and 
$X_{(0,\gamma)}=(0,X_{\gamma})$, $H_{(0,\gamma)}=(0,H_\gamma)$, $Y_{(0,\gamma)}=(0,Y_\gamma)$, for any $\gamma\in\Gamma$. 

By applying the general result of Delorme, one can deduce the description of the Manin triples of the form $(Q',\Delta_{\alpha},\mathfrak{l})$. 

First, let us denote by $i$ the embedding
\[\Gamma\setminus \{\alpha\}\hookrightarrow\Gamma^{ext}\setminus\{\alpha\},\]
Recall that $\mathfrak{g}_0$ denotes the reductive part of $P_{\alpha}^{-}$
and has the Dynkin diagram $\Gamma\setminus \{\alpha\}$. Then $i$ induces an inclusion 
\[\mathfrak{g}_{0}\hookrightarrow L_\alpha.\]
We will also denote this embedding by $i$. 

\begin{cor}\label{special Manin} 
Let $S:=\Gamma\setminus \{\alpha\}$
and $\zeta_S:=\{h\in\mathfrak{h}:\beta(h)=0, \forall \beta\in S\}$. 
For any Manin triple $(Q',\Delta_{\alpha},\mathfrak{l})$, there exists a unique
generalized Belavin-Drinfeld data
$\mathcal{B}\mathcal{D}=(A,A',\mathfrak{i}_{\mathfrak{a}},\mathfrak{i}_{\mathfrak{a}'})$
where $A:i(S)\times \{0\}\longrightarrow \{0\}\times S$,
$A(i(\gamma),0)=(0,\gamma)$ and 
$\mathfrak{i}_{\mathfrak{a}}=\rm{diag}(\zeta_S)$,
such that $(Q',\Delta_{\alpha},\mathfrak{l})$ is conjugate to the Manin triple
$\mathcal{T}_{\mathcal{B}\mathcal{D}}=(Q',\mathfrak{i},\mathfrak{i'})$. Moreover, up to a conjugation which preserves $\Delta_{\alpha}$, one has 
$\mathfrak{l}=\mathfrak{i'}$. 
\end{cor}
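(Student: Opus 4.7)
The plan is to apply Delorme's classification (Theorem \ref{Manin}) to the Manin triple $(Q', \Delta_\alpha, \mathfrak{l})$ and then identify the resulting generalized Belavin--Drinfeld data by reading off the internal structure of $\Delta_\alpha$ in the form of $\mathfrak{i}=\mathfrak{k}\oplus\mathfrak{i}_\mathfrak{a}\oplus\mathfrak{n}$.

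I would begin by verifying through an explicit calculation that, with $A(i(\gamma),0)=(0,\gamma)$ and $\mathfrak{i}_\mathfrak{a}=\mathrm{diag}(\zeta_S)$, Delorme's construction reproduces exactly $\Delta_\alpha$. Under this $A$, the subalgebras $\mathfrak{m}_+$ and $\mathfrak{m}_-$ become $\mathfrak{g}_0^{ss}\times\{0\}$ and $\{0\}\times\mathfrak{g}_0^{ss}$, and $\tau$ is the natural identification $(x,0)\mapsto(0,x)$, giving $\mathfrak{k}=\mathrm{diag}(\mathfrak{g}_0^{ss})$. Combined with $\mathrm{diag}(\zeta_S)$ this yields $\mathfrak{k}\oplus\mathfrak{i}_\mathfrak{a}=\mathrm{diag}(\mathfrak{g}_0)$. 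Choosing $\mathfrak{b}_0$ so that $\mathfrak{b}_0\cap\mathfrak{r}_+$ is the positive Borel of $L_\alpha$ and $\mathfrak{b}_0'\cap\mathfrak{r}_-$ is the positive Borel of $\mathfrak{g}$, the positive roots of $L_\alpha$ outside $R_+$ are precisely those involving $\alpha_0=-\alpha_{\max}$ and hence span $\mathfrak{g}_{-k}$; symmetrically, the negative roots of $\mathfrak{g}$ outside $R_-$ span $\mathfrak{g}_{-1}\oplus\cdots\oplus\mathfrak{g}_{-k}$. Comparing the result with the description of $\Delta_\alpha$ coming from the previous lemma confirms $\mathfrak{i}=\Delta_\alpha$.

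Next, Theorem \ref{Manin}(ii) applied to $(Q',\Delta_\alpha,\mathfrak{l})$ yields a unique Belavin--Drinfeld datum $\mathcal{BD}$ together with $g\in\mathcal{R}$ such that $g(\Delta_\alpha,\mathfrak{l})=(\mathfrak{i}_\mathcal{BD},\mathfrak{i}'_\mathcal{BD})$. To show that $A$ and $\mathfrak{i}_\mathfrak{a}$ for $\mathcal{BD}$ are necessarily the ones prescribed, I would analyse the Levi decomposition of $g\Delta_\alpha=\mathfrak{i}_\mathcal{BD}$: its Levi factor is (the $g$-transport of) the diagonal copy of $\mathfrak{g}_0$ in $L_\alpha\times\mathfrak{g}$, whose semisimple part, being the graph of $\tau:\mathfrak{m}_+\to\mathfrak{m}_-$, must satisfy $\Gamma_+=i(S)\times\{0\}$, $\Gamma_-=\{0\}\times S$, with $A(i(\gamma),0)=(0,\gamma)$, by the uniqueness of this sort of Levi embedding inside the simple ideals $\mathfrak{r}_\pm$. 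The centre of this Levi is then forced to be $\mathrm{diag}(\zeta_S)=\mathfrak{i}_\mathfrak{a}$.

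Once the prescribed $A$ and $\mathfrak{i}_\mathfrak{a}$ are pinned down, the first step forces $\mathfrak{i}_\mathcal{BD}=\Delta_\alpha$ on the nose, whence $g$ belongs to $\mathrm{Stab}_\mathcal{R}(\Delta_\alpha)$, and $\mathfrak{l}=g^{-1}\mathfrak{i}'$ up to this stabiliser: this is the Moreover clause. The main obstacle is the middle identification step, where one must rule out that $g$ could carry the diagonal copy of $\mathfrak{g}_0^{ss}$ onto a different $\mathcal{R}$-conjugate sub-Levi of the same Dynkin type inside $L_\alpha\times\mathfrak{g}$; any residual ambiguity of this kind can be absorbed into a choice of $A'$ and $\mathfrak{i}_{\mathfrak{a}'}$, thereby leaving $A$ and $\mathfrak{i}_\mathfrak{a}$ canonically determined by $\Delta_\alpha$.
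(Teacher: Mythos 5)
Your proposal is correct and follows essentially the same route as the paper: apply Delorme's Theorem \ref{Manin}(ii) to $(Q',\Delta_{\alpha},\mathfrak{l})$, then use the fact that the reductive part of $\Delta_{\alpha}$ is the diagonal copy of $\mathfrak{g}_{0}$ (so that $\mathfrak{i}$ sits under the parabolic $P_{\alpha_{0}}^{+}\times P_{\alpha}^{-}$, forcing $\Gamma_{\pm}$, $\mathfrak{a}=\zeta_{S}\times\zeta_{S}$, and, via innerness of $\tau i$, the identity isometry $\tilde{A}=\mathrm{id}$). Your extra preliminary check that the prescribed $(A,\mathfrak{i}_{\mathfrak{a}})$ reproduces $\mathfrak{i}=\Delta_{\alpha}$ on the nose is left implicit in the paper but is consistent with its argument.
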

\begin{proof}
Let us suppose that $(Q',\Delta_{\alpha},\mathfrak{l})$ is a Manin triple. Then there exists a unique generalized Belavin-Drinfeld data 
$\mathcal{B}\mathcal{D}=(A,A',\mathfrak{i}_{\mathfrak{a}},\mathfrak{i}_{\mathfrak{a}'})$ such that the corresponding $\mathcal{T}_{\mathcal{B}\mathcal{D}}=(Q',\mathfrak{i},\mathfrak{i'})$ is conjugate to $(Q',\Delta_{\alpha},\mathfrak{l})$. Since $\mathfrak{i}$ and $\Delta_{\alpha}$ are conjugate and $\Delta_{\alpha}$ is ``under'' the parabolic subalgebra $P_{\alpha_{0}}^{+}\times P_{\alpha}^{-}$, it follows that 
$\mathfrak{i}$ is also ``under'' this parabolic and thus $\mathfrak{a}=
\zeta_S\times\zeta_S$. According to \cite{Del}, p. 136, the map $A$ should be an isometry between $i(S)\times\{0\}$ and $\{0\}\times S$. Let us write
$A(i(\gamma),0)=(0,\tilde{A}(\gamma))$, where $\tilde{A}:S\rightarrow S$ is an
isometry which will be determined below.

Let $\mathfrak{m}$ be the image of $\mathfrak{g}_0$ in $L_\alpha$ via the embedding $i$. Then $\mathfrak{m}$ is spanned by $X_{i(\beta)}$, $H_{i(\beta)}$, $Y_{i(\beta)}$ for all $\beta\in S$. 

According to Theorem \ref{Manin}, the Lagrangian subspace $\mathfrak{i}$ contains $\mathfrak{k}:=\{(X,\tau(X)):X\in\mathfrak{m}\}$, where $\tau$ satisfies the following conditions: 
$\tau(X_{i(\beta)})=X_{\tilde{A}\beta}$, $\tau(H_{i(\beta)})=H_{\tilde{A}\beta}$, $\tau(Y_{i(\beta)})=Y_{\tilde{A}\beta}$, for all $\beta\in S$. 

We obtain
$\tau i(X_{\beta})=X_{\tilde{A}\beta}$,
$\tau i(Y_{\beta})=Y_{\tilde{A}\beta}$,
$\tau i(H_{\beta})=H_{\tilde{A}\beta}$, for all $\beta\in S$.

Since $\mathfrak{i}$ and $\Delta_{\alpha}$ are conjugate, we must have 
$\mathfrak{i}_{\mathfrak{a}}=\rm{diag}(\zeta_S)$. Moreover
$\tau i$ has to be an inner automorphism of $\mathfrak{g}_0$. 
It follows that the isometry $\tilde{A}:S\rightarrow S$ which corresponds to this inner automorphism must be the identity. 
Thus $\tilde{A}=\rm{id}$ and this ends the proof.
\end{proof}

We will consider triples of the form $(\Gamma'_{1},\Gamma'_{2},\tilde{A'})$, where $\Gamma'_{1}\subseteq\Gamma^{ext} \setminus \{\alpha\}$, 
$\Gamma'_{2}\subseteq\Gamma$ and $\tilde{A'}$ is an isometry between $\Gamma'_{1}$ and $\Gamma'_{2}$.
\begin{defn}\label{types} 
We say that a triple $(\Gamma'_{1},\Gamma'_{2},\tilde{A'})$
is of \textit{type I} if $\alpha\notin\Gamma'_{2}$ and $(\Gamma'_{1},i(\Gamma'_{2}),i\tilde{A'})$ is an admissible triple in the sense of \cite{BD}.
The triple $(\Gamma'_{1},\Gamma'_{2},\tilde{A'})$ is of \textit{type II} if $\alpha\in\Gamma'_{2}$ and 
$\tilde{A'}(\beta)=\alpha$, for some
$\beta\in\Gamma'_{1}$ and $(\Gamma'_{1}\setminus\{\beta\},
i(\Gamma'_{2}\setminus\{\alpha\}),i\tilde{A'})$ is an
admissible triple in the sense of \cite{BD}.

\end{defn}

Using the definition of generalized Belavin-Drinfeld data, one can easily check the following:
\begin{lem}
Let  $A:i(S)\times \{0\}\longrightarrow \{0\}\times S$,
$A(i(\gamma),0)=(0,\gamma)$ and $\mathfrak{i}_{\mathfrak{a}}=\rm{diag}(\zeta_S)$.
A quadruple $(A,A', \mathfrak{i}_{\mathfrak{a}},\mathfrak{i}_{\mathfrak{a'}})$ is
generalized Belavin-Drinfeld data if and only if the pair
$(A',\mathfrak{i}_{\mathfrak{a'}})$ satisfies the following conditions:

(1) $A':\Gamma'_{1}\times \{0\} \longrightarrow \{0\}\times \Gamma'_{2}$ is given by 
$A'(\gamma,0)=(0,\tilde{A'}(\gamma))$ and $(\Gamma'_{1},\Gamma'_{2},\tilde{A'})$ is of type I or II from above.

(2) Let $\mathfrak{f}$ be the subspace of $\mathfrak{h}\times\mathfrak{h}$
spanned by pairs $(H_{i(\gamma)},H_{\gamma})$ for all $\gamma\in S$ and 
$\mathfrak{f'}$ be the subspace of $\mathfrak{h}\times\mathfrak{h}$ spanned by pairs
$(H_{\beta},H_{\tilde{A'}(\beta)})$ for all $\beta\in\Gamma'_{1}$. Let
$\mathfrak{i}_{\mathfrak{a'}}$ be Lagrangian subspace of
$\mathfrak{a'}:=\{(h_{1},h_{2})\in\mathfrak{h}\times\mathfrak{h}:\beta(h_{1})=0,
\gamma(h_{2})=0, \forall\beta\in\Gamma'_{1}, \forall \gamma\in\Gamma'_{2}\}$. Then
\begin{equation}\label{condf}
(\mathfrak{f'}\oplus\mathfrak{i}_{\mathfrak{a'}})\cap(\mathfrak{f}\oplus\mathfrak{i}_{\mathfrak{a}})={0}.
\end{equation}
\end{lem}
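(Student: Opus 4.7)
The plan is to go through Delorme's five defining conditions for generalized Belavin--Drinfeld data from Definition \ref{BD data}, applied to $\mathfrak{r}=L_\alpha\times\mathfrak{g}$, $B=Q'$, $\Sigma_+=(\Gamma^{ext}\setminus\{\alpha\})\times\{0\}$, $\Sigma_-=\{0\}\times\Gamma$, and the particular fixed $A$ and $\mathfrak{i}_{\mathfrak{a}}$. Three of the five conditions become automatic under these choices, one recovers condition (2) of the lemma, and the nilpotency requirement on $C=A^{-1}A'$ reduces to the type I/type II dichotomy of condition (1).

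First I would dispatch the conditions that involve only the prescribed $A$ and $\mathfrak{i}_{\mathfrak{a}}$. Condition (1) of Delorme applied to $A$ is immediate from the sign convention in $Q'$ together with the fact that $i$ preserves the Killing form on $\mathfrak{g}_0$, since both sides collapse to $\pm K(H_\gamma,H_\delta)$. For condition (4) on $\mathfrak{i}_{\mathfrak{a}}$, the orthogonal $\mathfrak{a}$ inside $\mathfrak{h}\times\mathfrak{h}$ of the span of $\{H_{(i(\gamma),0)},H_{(0,\gamma)}\}_{\gamma\in S}$ is precisely $\zeta_S\times\zeta_S$, on which $Q'$ reduces to $((h_1,h_2),(k_1,k_2))\mapsto K(h_1,k_1)-K(h_2,k_2)$, so that $\mathrm{diag}(\zeta_S)$ is manifestly Lagrangian and of the correct dimension. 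Delorme's condition (2) for $A'$ then forces $A'$ to have the form $A'(\gamma,0)=(0,\tilde{A'}(\gamma))$ for an isometry $\tilde{A'}:\Gamma'_1\to\Gamma'_2$, matching the first half of condition (1) of the lemma.

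The main obstacle is interpreting Delorme's condition (3) as the Belavin--Drinfeld admissibility of an auxiliary triple. A direct computation gives $\mathrm{dom}(C)=\{(\gamma,0)\in\Gamma'_1\times\{0\}:\tilde{A'}(\gamma)\in S\}$ and $C(\gamma,0)=(i\tilde{A'}(\gamma),0)$ on this domain. If $\alpha\notin\Gamma'_2$, then $\mathrm{dom}(C)=\Gamma'_1\times\{0\}$ and Delorme's nilpotency becomes verbatim the admissibility of $(\Gamma'_1,i(\Gamma'_2),i\tilde{A'})$ in the sense of \cite{BD}, i.e.\ type I. If $\alpha\in\Gamma'_2$, let $\beta\in\Gamma'_1$ be the unique preimage under $\tilde{A'}$; then $\beta\notin\mathrm{dom}(C)$ and any $C$-orbit exits $\mathrm{dom}(C)$ either by reaching $\beta$ or by leaving $\Gamma'_1$ altogether, which is exactly admissibility of the reduced triple $(\Gamma'_1\setminus\{\beta\},i(\Gamma'_2\setminus\{\alpha\}),i\tilde{A'})$, i.e.\ type II. The bookkeeping to get right is that $i$ is defined on $\Gamma\setminus\{\alpha\}$ only, so $i\tilde{A'}$ is undefined at $\beta$, and this is what forces the removal of the pair $(\beta,\alpha)$ in the type II formulation.

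Finally, condition (4) of Delorme applied to $\mathfrak{i}_{\mathfrak{a}'}$ is precisely the Lagrangian requirement in the space $\mathfrak{a}'$ described in the lemma, while condition (5) translates literally into equation (\ref{condf}): with our choice of $A$ the subspace $\mathfrak{f}$ is spanned by the pairs $(H_{i(\gamma)},H_\gamma)$ for $\gamma\in S$, and $\mathfrak{f}'$ by $(H_\beta,H_{\tilde{A'}(\beta)})$ for $\beta\in\Gamma'_1$. Assembling these two items yields condition (2) of the lemma, completing the equivalence.
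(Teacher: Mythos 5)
Your proposal is correct and is exactly the verification the paper leaves implicit (the lemma is introduced with ``one can easily check''): you unwind Delorme's five conditions for the fixed $A$ and $\mathfrak{i}_{\mathfrak{a}}$, correctly identify $\mathfrak{a}=\zeta_S\times\zeta_S$ and $\mathfrak{a}'$, and your computation $\mathrm{dom}(C)=\{(\gamma,0):\tilde{A'}(\gamma)\in S\}$, $C(\gamma,0)=(i\tilde{A'}(\gamma),0)$ reduces Delorme's nilpotency condition precisely to the type I/type II dichotomy, including the correct handling of the undefined value $i(\alpha)$ in the type II case. No gaps.
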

\begin{rem}
One can always find $\mathfrak{i}_{\mathfrak{a'}}$ which is a Lagrangian subspace of $\mathfrak{a'}$ and satisfies condition (\ref{condf}). This is a consequence of \cite{Del} Remark 2, p. 142. A proof of this elementary fact can also be found in \cite{S3}, Lemma 5.2.
\end{rem}
Summing up the previous results we conclude the following:

\begin{thm}\label{main}
Let $\alpha$ be a simple root. Suppose that $\mathfrak{l}$ is a Lagrangian subalgebra of $L_{\alpha}\times\mathfrak{g}$ transversal to $\Delta_{\alpha}$. Then, up to a conjugation which preserves $\Delta_{\alpha}$, one has $\mathfrak{l}=\mathfrak{i'}$, where $\mathfrak{i'}$ is constructed from a pair formed by 
a triple $(\Gamma'_{1},\Gamma'_{2},\tilde{A'})$ of type I or II and
a Lagrangian subspace $\mathfrak{i}_{\mathfrak{a'}}$ of
$\mathfrak{a'}$ such that (\ref{condf}) is satisfied. 

\end{thm}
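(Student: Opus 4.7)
The plan is to read off Theorem \ref{main} as a direct synthesis of Corollary \ref{special Manin} with the classifying lemma that immediately precedes the theorem. The work has, in effect, already been done; the theorem is the packaging.

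First, I would observe that since $Q'$ is a nondegenerate invariant form on $L_\alpha\times\mathfrak{g}$ and $\Delta_\alpha$ is Lagrangian, any Lagrangian subalgebra $\mathfrak{l}$ that is transversal to $\Delta_\alpha$ yields a Manin triple $(Q',\Delta_\alpha,\mathfrak{l})$. Apply Corollary \ref{special Manin}: there exists a conjugation preserving $\Delta_\alpha$ after which $\mathfrak{l}=\mathfrak{i}'$ for a unique generalized Belavin--Drinfeld datum $\mathcal{BD}=(A,A',\mathfrak{i}_\mathfrak{a},\mathfrak{i}_{\mathfrak{a}'})$ whose first half is rigidly determined: $A\colon i(S)\times\{0\}\to\{0\}\times S$ is $A(i(\gamma),0)=(0,\gamma)$ and $\mathfrak{i}_\mathfrak{a}=\mathrm{diag}(\zeta_S)$. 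This reduces the classification of $\mathfrak{l}$ to the classification of the admissible second halves $(A',\mathfrak{i}_{\mathfrak{a}'})$.

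Next I would invoke the lemma immediately above the theorem. With the first half of the BD datum fixed as in Corollary \ref{special Manin}, the lemma asserts that a pair $(A',\mathfrak{i}_{\mathfrak{a}'})$ completes it to valid generalized Belavin--Drinfeld data exactly when (i) $A'$ is induced by an isometry $\tilde{A'}\colon \Gamma'_1\to\Gamma'_2$ giving a triple $(\Gamma'_1,\Gamma'_2,\tilde{A'})$ of type I or type II in the sense of Definition \ref{types}, and (ii) $\mathfrak{i}_{\mathfrak{a}'}$ is a Lagrangian subspace of $\mathfrak{a}'$ satisfying the transversality condition (\ref{condf}). Thus $\mathfrak{i}'$, constructed from $\mathcal{BD}$ via Theorem \ref{Manin}, is exactly the object described in the statement.

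The only point that needs a moment's care is the phrase \emph{up to a conjugation which preserves $\Delta_\alpha$}. Corollary \ref{special Manin} already delivers conjugation of this restricted kind, since $\Delta_\alpha$ itself corresponds to the distinguished Manin triple with $A$ equal to the identification of $S$ with itself and $\mathfrak{i}_\mathfrak{a}=\mathrm{diag}(\zeta_S)$; in other words the stabilizer of $\Delta_\alpha$ inside the conjugation group of Delorme's theorem is used precisely to normalize the first half of the BD datum. So the mild obstacle here is just ensuring one does not break $\Delta_\alpha$ when conjugating $\mathfrak{l}$; but this is exactly what Corollary \ref{special Manin} records. Combining these two inputs gives the theorem verbatim, with no further calculation required.
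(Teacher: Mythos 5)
Your proposal is correct and follows exactly the route the paper intends: the paper offers no separate argument for Theorem \ref{main} beyond the phrase ``summing up the previous results,'' and those results are precisely Corollary \ref{special Manin} (normalizing $A$ and $\mathfrak{i}_{\mathfrak{a}}$ and giving $\mathfrak{l}=\mathfrak{i}'$ up to a conjugation preserving $\Delta_{\alpha}$) combined with the preceding lemma (characterizing the admissible pairs $(A',\mathfrak{i}_{\mathfrak{a}'})$ as type I or II triples with $\mathfrak{i}_{\mathfrak{a}'}$ Lagrangian and satisfying (\ref{condf})). Your write-up simply makes that synthesis explicit, so there is nothing to correct.
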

\begin{rem}
The above algorithm covers also the case when the simple root $\alpha$ 
has coefficient $k=1$ in the decomposition of $\alpha_{\rm{max}}$,
which was considered in \cite{KPSST}. In this form we do not need to
use the Cartan involution which appeared there.
\end{rem}
\begin{rem}
In \cite{KPSST} we constructed two examples of quasi-trigonometric solutions for $\mathfrak{g}=\mathfrak{sl}(n+1)$ related to the Cremmer-Gervais $r$-matrix. 
Let us apply the present algorithm for the simple root $\alpha_{n}$. 
Let $\Gamma=\{\alpha_{1},...,\alpha_{n}\}$ and $\alpha_{0}=-\alpha_{\rm{max}}$.

We consider the triple $(\Gamma'_{1},\Gamma'_{2},\tilde{A'})$, where
$\Gamma'_{1}=\{\alpha_{0},\alpha_{1},...\alpha_{n-2}\}$,
$\Gamma'_{2}=\{\alpha_{1},...\alpha_{n-1}\}$ and $\tilde{A'}(\alpha_j)=\alpha_{j+1}$, for $j=0,...,n-2$.
This is a triple of type I, cf. Definition \ref{types}. 

Another example is given by the following triple:
$\Gamma'_{1}=\{\alpha_{0},\alpha_{1},...\alpha_{n-1}\}$,
$\Gamma'_{2}=\{\alpha_{1},...\alpha_{n}\}$ and $\tilde{A'}(\alpha_{j})=\alpha_{j+1}$,
for $j=1,...,n-1$. This is a triple of type II, cf. Definition \ref{types}. 

The above triples together with the corresponding subspaces $\mathfrak{i}_{\mathfrak{a}'}$ (which we do not compute here) induce two Lagrangian subalgebras of Cremmer-Gervais type. 

\end{rem}

Theorem \ref{main} can also be applied to roots which have coefficient
greater than one. 
As an example, let us classify solutions in $\mathfrak{g}=o(5)$.
\begin{cor}
Let $\alpha_1$, $\alpha_2$ be the simple roots in $o(5)$ and
$\alpha_0=-2\alpha_{1}-\alpha_{2}$. Up to gauge equivalence, there exist
two quasi-trigonometric solutions with non-trivial polynomial part.
\end{cor}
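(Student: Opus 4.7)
The plan is to apply Theorem \ref{main} to each simple root of $o(5)$ in turn and enumerate the resulting generalized Belavin--Drinfeld data up to gauge equivalence. By the result of \cite{KPSST} recalled in the introduction, the case $\alpha = \alpha_0$ yields only solutions with constant polynomial part, so I restrict attention to $\alpha = \alpha_1$ (coefficient $k=2$, short root) and $\alpha = \alpha_2$ (coefficient $k=1$, long root), and aim to show that each of these contributes exactly one solution with non-constant polynomial part.

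For $\alpha = \alpha_2$, deleting $\alpha_2$ from the extended Dynkin diagram leaves the connected two-node diagram on $\{\alpha_0, \alpha_1\}$, so $L_{\alpha_2}$ is again of type $B_2$. The candidate triples $(\Gamma'_1, \Gamma'_2, \tilde{A'})$ of Definition \ref{types} have $\Gamma'_1 \subseteq \{\alpha_0,\alpha_1\}$ and $\Gamma'_2 \subseteq \{\alpha_1,\alpha_2\}$ with $\tilde{A'}$ an isometry; the combination of the length constraint (only $\alpha_0$ and $\alpha_2$ share a length, while $\alpha_1$ is short) with the string/nilpotence condition in the admissible sub-triple eliminates every non-trivial candidate except the type II triple $\Gamma'_1 = \{\alpha_0\}$, $\Gamma'_2 = \{\alpha_2\}$, $\tilde{A'}(\alpha_0) = \alpha_2$. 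For $\alpha = \alpha_1$, deleting $\alpha_1$ disconnects the extended diagram into two long-root nodes, so $L_{\alpha_1} \cong \mathfrak{sl}(2) \oplus \mathfrak{sl}(2)$; no type II triple exists because no simple root of $L_{\alpha_1}$ matches the length of the short root $\alpha_1$, and the analogous type I enumeration leaves as unique non-trivial option $\Gamma'_1 = \{\alpha_0\}$, $\Gamma'_2 = \{\alpha_2\}$, $\tilde{A'}(\alpha_0) = \alpha_2$, which is admissible because the image of $\tau$ lies outside $\Gamma'_1$.

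For each of the two surviving non-trivial triples I invoke the remark preceding Theorem \ref{main} to fix a Lagrangian subspace $\mathfrak{i}_{\mathfrak{a}'} \subseteq \mathfrak{a}'$ satisfying condition \eqref{condf}, and reconstruct the Lagrangian subalgebra $\mathfrak{l}$. Using the explicit isomorphism $\sigma$ of the second lemma of Section 2 together with Lemma \ref{Lemma1}, I then read off the corresponding quasi-trigonometric $r$-matrix from its preimage $W$ and verify that its polynomial part is non-constant; this follows from the fact that $\mathfrak{l}$ has non-trivial image in the summands $\mathfrak{g}_{\pm k}$, which forces genuine $u$-dependence in the polynomial part.

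Finally, I must show that the two resulting $r$-matrices are pairwise gauge inequivalent. I propose to do this by appealing to the fact from \cite{KPSST} that any maximal order $W$ corresponding to a quasi-trigonometric solution embeds, up to gauge equivalence, into $\mathbb{O}_\alpha \times \mathfrak{g}$ for a \emph{unique} vertex $\alpha$ of the standard simplex; since my two solutions come from different vertices ($\alpha_1$ and $\alpha_2$ respectively), they must be gauge inequivalent. The main obstacle lies in the case-by-case elimination of candidate triples in the second paragraph, but each case is handled immediately because $B_2$ has only two simple roots and very few possible isometries, and the length asymmetry between the short and long roots kills almost every candidate at once.
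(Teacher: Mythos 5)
Your overall strategy is the same as the paper's: discard $\alpha_0$ by the constancy result from \cite{KPSST}, apply Theorem \ref{main} to the remaining two vertices, and show that each of $\alpha_1$ and $\alpha_2$ admits exactly one triple of type I or II, namely $\Gamma'_1=\{\alpha_0\}$, $\Gamma'_2=\{\alpha_2\}$, $\tilde{A'}(\alpha_0)=\alpha_2$ (type I for $\alpha_1$, type II for $\alpha_2$). Your case-by-case elimination via root lengths and the nilpotency condition on $C$ is correct and in fact more detailed than the paper's proof, which simply asserts that these are the only possible choices. Your added arguments --- that the two solutions are gauge inequivalent because they sit at distinct vertices, and that the polynomial part is non-constant --- address points the paper leaves implicit, and are reasonable, although the uniqueness of the vertex attached to a maximal order is itself only implicit in the discussion of \cite{KPSST} recalled in Section 2.

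There is, however, one genuine gap. By Theorem \ref{main} a solution is determined by a \emph{pair}: a triple of type I or II \emph{and} a Lagrangian subspace $\mathfrak{i}_{\mathfrak{a}'}\subseteq\mathfrak{a}'$ satisfying \eqref{condf}. To conclude that each vertex contributes exactly one solution you must control not only the triples but also the admissible choices of $\mathfrak{i}_{\mathfrak{a}'}$; by Delorme's uniqueness statement (Theorem \ref{Manin}(ii)), distinct choices of $\mathfrak{i}_{\mathfrak{a}'}$ give non-conjugate Manin triples, so a second valid choice would a priori produce additional solutions. You invoke the Remark only for \emph{existence} of such a subspace, which bounds the count from below but not from above. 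Here $\mathfrak{a}'$ is two-dimensional and carries the split form $K(h_1,h_1)-K(h_2,h_2)$, so it has exactly two isotropic lines, and one must check that precisely one of them satisfies \eqref{condf}; this is what the paper does when it exhibits $\mathfrak{i}_{\mathfrak{a}'}$ explicitly as the line spanned by $(\mathrm{diag}(-2,1,0,-1,2),\mathrm{diag}(0,\sqrt{5},0,-\sqrt{5},0))$. Adding this short computation (for each of the two vertices) would close the gap.
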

\begin{proof}
The root $\alpha_{2}$ has coefficient $k=1$ in the decomposition of the maximal root. The only possible choice for a triple 
$(\Gamma'_{1},\Gamma'_{2},\tilde{A'})$ with 
$\Gamma'_{1}\subseteq\{\alpha_{0},\alpha_{1}\}$, 
$\Gamma'_{2}\subseteq\{\alpha_{1},\alpha_{2}\}$ to be of type I or II is 
$\Gamma'_{1}=\{\alpha_{0},\}$, $\Gamma'_{2}=\{\alpha_{2},\}$, 
$\tilde{A'}(\alpha_{0})=\alpha_{2}$. One can check that
$\mathfrak{i}_{\mathfrak{a'}}$ is a 1-dimensional space spanned by
the following pair 
$(\rm{diag}(-2,1,0,-1,2),\rm{diag}(0,\sqrt{5},0,-\sqrt{5},0))$. 
 The Lagrangian subalgebra $\mathfrak{i}'_{2}$ constructed from this triple is transversal to $\Delta_{\alpha_{2}}$ in $\mathfrak{g}\times\mathfrak{g}$. 

The root $\alpha_{1}$ has coefficient $k=2$ in the decomposition of the maximal root. The only possible choice for a triple 
$(\Gamma'_{1},\Gamma'_{2},\tilde{A'})$ with 
$\Gamma'_{1}\subseteq\{\alpha_{0},\alpha_{2}\}$, 
$\Gamma'_{2}\subseteq\{\alpha_{1},\alpha_{2}\}$ is again
$\Gamma'_{1}=\{\alpha_{0},\}$, $\Gamma'_{2}=\{\alpha_{2},\}$, 
$\tilde{A'}(\alpha_{0})=\alpha_{2}$ and 
$\mathfrak{i}_{\mathfrak{a'}}$ is as in the previous case. 
The Lagrangian subalgebra $\mathfrak{i}'_{1}$ constructed from this triple is transversal to $\Delta_{\alpha_{1}}$ in 
$L_{\alpha_{1}}\times\mathfrak{g}$.

\end{proof}

\section{Quantization of quasi-trigonometric solutions}
Let us consider the Lie bialgebra structure on $\mathfrak{g}[u]$ given by 
the simplest quasi-trigonometric solution: 
\begin{equation}\label{Delta}
\delta(p(u))=[r(u,v),p(u)\otimes 1+1\otimes p(v)],
\end{equation}
with $r(u,v)=v\Omega/(u-v) +\Sigma_{\alpha} e_{\alpha}\otimes f_{\alpha}+{\frac{1}{2}}\Omega_0$,
where ${e_{\alpha},  f_{\alpha}}$ are root vectors of $\mathfrak{g}$ and $\Omega_0$ is the Cartan part of  $\Omega$. Here $\delta$ and $r$ are exactly the same as $\delta_3$ and $r_3$ from the Introduction.

Quasi-trigonometric solutions correspond to Lie bialgebra
structures on $\mathfrak{g}[u]$ which are obtained by twisting $\delta$.
The quantization of the Lie bialgebra 
$(\mathfrak{g}[u],\delta)$ is the quantum algebra $U_q(\mathfrak{g}[u])$ 
introduced by V. Tolstoy in \cite{tol}. We will recall its construction with a
slight modification. 

Let $\mathfrak{g}$ be a finite-dimensional complex simple Lie algebra
of rank $l$ with a standard Cartan matrix $A=(a_{ij})_{i,j=1}^l$, with
a system of simple roots $\Gamma= \{\alpha_1,\ldots, \alpha_l\}$, and with a
Chevalley basis $h_{\alpha_i}$, $e_{\pm\alpha_i}$ $(i=1,2,\ldots, l)$.
Let $\theta$ be the maximal (positive) root of $\mathfrak{g}$. The corresponding 
non-twisted affine algebra with zero central charge $\hat{\mathfrak{g}}$ is generated by
$\mathfrak{g}$ and the additional affine elements $e_{\delta-\theta}:=ue_{-\theta}$,
$e_{-\delta+\theta}:=u^{-1}e_{\theta}$.

The Lie algebra $\mathfrak{g}[u]$ is generated by $\mathfrak{g}$,
the positive root vector $e_{\delta-\theta}$ and the Cartan element
$h_{\delta-\theta}=[e_{\delta-\theta},e_{-\delta+\theta}]$. The standard
defining relations of the universal
enveloping algebra $U(\mathfrak{g}[u])$ are given by the formulas:
\begin{eqnarray}
[h_{\alpha_{i}}^{},h_{\alpha_{j}}^{}]\!\!&=\!\!&0~,
\label{tolsA1'}
\\[5pt]
[h_{\alpha_{i}}^{},e_{\pm\alpha_{j}}]\!\!&=\!\!&
\pm(\alpha_{i},\alpha_{j})e_{\pm\alpha_{j}}~,
\label{tolsA2}
\\[5pt]
[e_{\alpha_{i}},e_{-\alpha_{j}}]\!\!&=\!\!&\delta_{ij}h_{\alpha_{i}}~,
\label{tolsA2'}
\\[5pt]
(\mathrm{ad} e_{\pm\alpha_{i}}\!)^{n_{ij}} e_{\pm\alpha_{j}}\!\!&=\!\!&0
\qquad{\rm for}\;\;i\neq j,\;\;n_{ij}\!:=\!1\!-\!a_{ij}~,
\label{tolsA3}
\\[5pt]
[h_{\alpha_i}^{},e_{\delta-\theta}]\!\!&=\!\!&-
(\alpha_i,\theta)\,e_{\delta-\theta}'~,
\label{tolsA3'}
\\[5pt]
[e_{-\alpha_i},e_{\delta-\theta}]\!\!&=\!\!&0~,
\label{tolsA4}
\\[5pt]
(\mathrm{ad} e_{\alpha_i})^{n_{i0}}e_{\delta-\theta}\!\!&=\!\!&0
\quad\;{\rm for}\;\,n_{i0}=1\!+\!2(\alpha_i,\theta)/(\alpha_i,\alpha_i),
\label{tolsA5}
\\[5pt]
[[e_{\alpha_i},e_{\delta-\theta}],e_{\delta-\theta}]\!\!&=\!\!&0\quad\;
{\rm for}\;\,\mathfrak{g}\ne\mathfrak{sl}_2 \;\;{\rm and}\;\;(\alpha_i,\theta)\ne 0~,
\label{tolsA6}
\\[5pt]
[[[e_{\alpha},e_{\delta-\alpha}],e_{\delta-\alpha}],e_{\delta-\alpha}]
\!\!&=\!\!&0\quad\;{\rm for}\;\,\mathfrak{g}=\mathfrak{sl}_2
\;\;(\theta=\alpha)~.
\label{tolsA7}
\end{eqnarray}

The quantum algebra $U_q(\mathfrak{g}[u])$ is 
a $q$-defor\-mation of $U(\mathfrak{g}[u])$.
The Chevalley generators for  $U_q(\mathfrak{g}[u])$ are 
$k_{\alpha_i}^{\pm 1}:=q^{\pm h_{\alpha_i}}$, $e_{\pm\alpha_i}$
$(i=1,2,\ldots, l)$, $e_{\delta-\theta}$
and $k_{\delta-\theta}:=q^{h_{\delta-\theta}}$.  Then the defining relations of
$U_q(\mathfrak{g}[u])$ are the following:
\begin{eqnarray}
k_{\alpha_i}^{\pm 1}k_{\alpha_j}^{\pm 1}\!\!&=\!\!&
k_{\alpha_j}^{\pm 1}k_{\alpha_i}^{\pm 1}~,
\label{tolsA8'}
\\[5pt]
k_{\alpha_i}^{}k^{-1}_{\alpha_i}\!\!&=\!\!&
k^{-1}_{\alpha_i}k_{\alpha_i}^{}=1~,
\label{tolsA9}
\\[5pt]
k_{\alpha_i}^{}e_{\pm\alpha_j}^{}k^{-1}_{\alpha_i}\!\!&=\!\!&
q^{\pm(\alpha_i,\alpha_j)}e_{\pm\alpha_j}^{}~,
\label{tolsA9'}
\\[5pt]
[e_{\alpha_i}^{},e_{-\alpha_i}^{}]\!\!&=\!\!&
\frac{k_{\alpha_i}-k_{\alpha_i}^{-1}}{q-q^{-1}}~,
\label{tolsA10}
\\[5pt]
(\mathrm{ad}_{q}e_{\pm\alpha_{i}}^{}\!)^{n_{ij}}e_{\pm\alpha_{j}^{}}\!\!&=\!\!&0
\quad{\rm for}\;\,i\neq j,\;\;n_{ij}\!:=\!1\!-\!a_{ij}~,
\label{tolsA11}
\\[5pt]
k_{\alpha_i}^{}e_{\delta-\theta}^{}k^{-1}_{\alpha_i}\!\!&=\!\!&
q^{-(\alpha_i,\theta)}e_{\delta-\theta}^{}~,
\label{tolsA11'}
\\[5pt]
[e_{-\alpha_i}^{},e_{\delta-\theta}^{}]\!\!&=\!\!&0~,
\label{tolsA12}
\\[5pt]
(\mathrm{ad}_{q}e_{\alpha_i}^{})^{n_{i0}}e_{\delta-\theta}^{}\!\!&=\!\!&0
\quad{\rm for}\;\;n_{i0}=1\!+\!2(\alpha_i,\theta)/(\alpha_i,\alpha_i),
\label{tolsA13}
\\[5pt]
[[e_{\alpha_i}^{},e_{\delta-\theta}^{}]_{q},
e_{\delta-\theta}^{}]_{q}\!\!&=\!\!&0\quad{\rm for}\;\;
\mathfrak{g}\ne\mathfrak{sl}_2\;\;{\rm and}\;\;(\alpha_i,\theta)\ne0~,
\label{tolsA14}
\\[5pt]
[[[e_{\alpha}^{},e_{\delta-\alpha}^{}]_{q},
e_{\delta-\alpha}^{}]_{q},e_{\delta-\alpha}^{}]_{q}\!\!&=\!\!&0
\quad{\rm for}\;\;\mathfrak{g}=\mathfrak{sl}_2~,
\label{tolsA15}
\end{eqnarray}
where $(\mathrm{ad}_q e_{\beta})e_{\gamma}$ is the q-commutator:
\begin{equation}
\quad\quad\;{}
(\mathrm{ad}_q e_{\beta}^{})e_{\gamma}^{}:= [e_\beta^{},e_\gamma^{}]_q:=
e_\beta^{}e_\gamma^{}-q^{(\beta,\gamma)}e_\gamma^{}e_\beta^{}~.
\label{tolsA16}
\end{equation}
The comultiplication $\Delta_{q}$, the antipode $S_{q}$, and
the co-unit $\varepsilon_{q}$ of $U_{q}(\mathfrak{g}[u])$
are given by
\begin{equation}
\begin{array}{rcl}
\Delta_{q}(k_{\alpha_i}^{\pm 1})\!\!&=\!\!&
k_{\alpha_i}^{\pm 1}\otimes k_{\alpha_i}^{\pm 1}~
\\[5pt]
\Delta_{q}(e_{-\alpha_i}^{})\!\!&=\!\!&
e_{-\alpha_i}^{}\otimes k_{\alpha_i}^{}+1\otimes
e_{-\alpha_i}^{}~,
\\[5pt]
\Delta_{q}(e_{\alpha_i}^{})\!\!&=\!\!&
e_{\alpha_i}^{}\otimes1+k_{\alpha_i}^{-1}\otimes
e_{\alpha_i}^{}~,\qquad
\\[5pt]
\Delta_{q}(e_{\delta-\theta}^{})\!\!&=\!\!&e_{\delta-\theta}^{}
\otimes1+k_{\delta-\theta}^{-1}\otimes
e_{\delta-\theta}^{}~,\qquad
\label{tolsA19}
\end{array}
\end{equation}
\begin{equation}
\begin{array}{rcl}
S_{q}(k_{\alpha_i}^{\pm 1})\!\!&=\!\!&k_{\alpha_i}^{\mp 1}~,
\\[5pt]
S_{q}(e_{-\alpha_i}^{})\!\!&=\!\!&-e_{-\alpha_i}^{}k_{\alpha_i}^{-1}~,
\\[5pt]
S_{q}(e_{\alpha_i}^{})\!\!&=\!\!&-k_{\alpha_i}^{}e_{\alpha_i}^{}~,
\\[5pt]
S_{q}(e_{\delta-\theta}^{})\!\!&=\!\!&
-k_{\delta-\theta}^{}e_{\delta-\theta}^{}~,
\end{array}
\end{equation}
\begin{equation}
\varepsilon_{q}(e_{\pm\alpha_i})~=~\varepsilon_{q}(e_{\delta-\theta})=0~,
\qquad\varepsilon_{q}(k_{\alpha_i}^{\pm 1})=1.
\label{tolsA22'}
\end{equation}
We see that
\begin{equation}
\qquad\;{}k_{\delta-\theta}^{}=\,k_{\alpha_1}^{-n_1}
k_{\alpha_2}^{-n_2}\cdots k_{\alpha_l}^{-n_l}
\label{tolsA23}
\end{equation}
if $\theta=n_1^{}\alpha_1^{}+n_2^{}\alpha_2^{}+\cdots\,+ n_l^{}\alpha_l^{}$.
\par

Let us consider now $U_{\hbar}(\mathfrak{g}[u])$, which is an algebra over 
$\mathbb{C}[\hbar]]$ defined by the previous relations in which we set 
$q=\mathrm{exp}(\hbar)$. 

\begin{thm}
The classical limit of $U_{\hbar}(\mathfrak{g}[u])$ is 
$(\mathfrak{g}[u],\delta)$.
\end{thm}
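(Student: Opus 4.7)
The plan is to verify two statements: (i) the reduction $U_\hbar(\mathfrak{g}[u])/\hbar\,U_\hbar(\mathfrak{g}[u])$ is isomorphic to $U(\mathfrak{g}[u])$ as a Hopf algebra; and (ii) the cobracket induced by the non-cocommutativity of $\Delta_q$ reduces modulo $\hbar$ to the classical cobracket $\delta$ on $\mathfrak{g}[u]$. Both checks are carried out on Chevalley generators, using $q=\exp(\hbar)$ together with the expansions $k_{\alpha_i} = 1 + \hbar h_{\alpha_i} + O(\hbar^2)$ and $k_{\delta-\theta} = 1 + \hbar h_{\delta-\theta} + O(\hbar^2)$.

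For (i), I reduce each defining relation of $U_\hbar(\mathfrak{g}[u])$ modulo $\hbar$. Relations (\ref{tolsA8'}), (\ref{tolsA9}) are trivial at $\hbar=0$. Relations (\ref{tolsA9'}), (\ref{tolsA11'}) give (\ref{tolsA2}), (\ref{tolsA3'}) by expanding $q^{\pm(\alpha_i,\alpha_j)} = 1 \pm \hbar(\alpha_i,\alpha_j) + O(\hbar^2)$. Relation (\ref{tolsA10}) degenerates to (\ref{tolsA2'}) via $\hbar^{-1}(k_{\alpha_i}-k_{\alpha_i}^{-1}) \to 2 h_{\alpha_i}$ and $\hbar^{-1}(q-q^{-1})\to 2$. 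Since the $q$-commutator satisfies $[e_\beta,e_\gamma]_q = [e_\beta,e_\gamma] + O(\hbar)$, iterated $q$-adjoint actions have classical leading terms, so the quantum Serre and mixed-Serre relations (\ref{tolsA11}), (\ref{tolsA13}), (\ref{tolsA14}), (\ref{tolsA15}) reduce to (\ref{tolsA3}), (\ref{tolsA5}), (\ref{tolsA6}), (\ref{tolsA7}). This produces a surjection $U(\mathfrak{g}[u]) \to U_\hbar(\mathfrak{g}[u])/\hbar$, which is an isomorphism by the standard flatness of the quantization.

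For (ii), the induced Poisson cobracket on $\mathfrak{g}[u]$ is $\delta_\hbar(x) = \hbar^{-1}(\Delta_q(\tilde x) - \Delta_q^{\mathrm{op}}(\tilde x))\bmod \hbar$ for any lift $\tilde x$. Since $\Delta_q(k_{\alpha_i}) = k_{\alpha_i}\otimes k_{\alpha_i}$ is cocommutative, $\delta_\hbar(h_{\alpha_i}) = 0$, matching $[r(u,v), h_{\alpha_i}\otimes 1 + 1\otimes h_{\alpha_i}] = 0$, which vanishes by invariance of $\Omega$ and $\Omega_0$ together with the weight-zero cancellation inside $\sum_\alpha e_\alpha \otimes f_\alpha$. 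A short calculation from (\ref{tolsA19}) gives $\delta_\hbar(e_{\pm\alpha_i}) = \pm\, h_{\alpha_i}\wedge e_{\pm\alpha_i}$, which coincides with the classical cobracket produced by the constant Drinfeld--Jimbo part $\sum e_\alpha \otimes f_\alpha + \frac{1}{2}\Omega_0$ of $r$; the piece $v\Omega/(u-v)$ contributes nothing to $\delta(e_{\pm\alpha_i})$ by invariance. The analogous computation on the affine generator yields $\delta_\hbar(e_{\delta-\theta}) = h_{\delta-\theta}\wedge e_{\delta-\theta}$.

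The main obstacle is matching this last expression with the classical formula applied to the affine root vector $e_{\delta-\theta} = u\,e_{-\theta}$. The natural decomposition
\[
u\,e_{-\theta}\otimes 1 + 1 \otimes v\,e_{-\theta} = (u-v)\,e_{-\theta}\otimes 1 + v\,\bigl(e_{-\theta}\otimes 1 + 1 \otimes e_{-\theta}\bigr)
\]
kills the second summand against $v\Omega/(u-v)$ by invariance, while the first produces $v\,[\Omega, e_{-\theta}\otimes 1]$, which lies in $\mathfrak{g}\otimes\mathfrak{g}$ times $v$. Combining this with the bracket of $u\,e_{-\theta}\otimes 1 + 1\otimes v\,e_{-\theta}$ against the constant part $\sum e_\alpha\otimes f_\alpha + \frac{1}{2}\Omega_0$ of $r$, and re-identifying the factors of $u$ (respectively $v$) with $e_{\delta-\theta}$, one obtains exactly $h_{\delta-\theta}\wedge e_{\delta-\theta}$ after skew-symmetrization. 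Since both cobrackets are derivations of $\mathfrak{g}[u]$ and agree on the generating set $\{h_{\alpha_i},\, e_{\pm\alpha_i},\, e_{\delta-\theta}\}$, they coincide, which establishes the theorem.
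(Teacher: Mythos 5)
Your strategy --- identify the algebra by reducing the defining relations mod $\hbar$, then compute the induced cobracket on the Chevalley generators and invoke the cocycle property to extend --- is a legitimate plan, and it is genuinely different from the paper's argument. The paper never computes on generators at all: it uses that $U_{\hbar}(\mathfrak{g}[u])$ is a Hopf subalgebra of $U_{\hbar}(\hat{\mathfrak{g}})$ with zero central charge, invokes quasitriangularity $\Delta^{op}(\tilde a)=R\Delta(\tilde a)R^{-1}$ with $R=1\otimes 1+\hbar r+\dots$, and reads off $\hbar^{-1}(\Delta-\Delta^{op})(\tilde a)\equiv [r,a\otimes 1+1\otimes a]=\delta(a)$ for \emph{every} $a$ simultaneously, the only input being that the semiclassical limit of the universal $R$-matrix in the evaluation realization is $r_{3}$. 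Your route trades that one structural fact for explicit verifications, which would be fine --- except that it concentrates all the real content into the single computation you do not actually perform.

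That computation is $\delta(u e_{-\theta})$, and as sketched it does not close. Your handling of the $v\Omega/(u-v)$ part is correct and yields $v[\Omega,e_{-\theta}\otimes 1]$; but when you add the contribution of the constant part $\sum_{\alpha>0}e_{\alpha}\otimes f_{\alpha}+\tfrac12\Omega_{0}$, the combined term $(u+v)\sum_{\alpha>0}[e_{\alpha},e_{-\theta}]\otimes f_{\alpha}$ appears. The mirror terms $\sum_{\alpha>0}e_{\alpha}\otimes[f_{\alpha},e_{-\theta}]$ all vanish because $\theta$ is the highest root, so nothing cancels the summands with $\alpha\neq\theta$, $\theta-\alpha\in R$: these are of the form $e_{\alpha-\theta}\otimes e_{-\alpha}$ (a product of two negative root vectors) and cannot be absorbed into $h_{\delta-\theta}\wedge e_{\delta-\theta}$; moreover the $\alpha=\theta$ summand contributes $u\,[e_{\theta},e_{-\theta}]\otimes f_{\theta}$, which has $u h_{\theta}$ in the first slot rather than $u e_{-\theta}$. (The identity you want \emph{does} come out cleanly if the constant part of $r$ is taken as $\sum_{\alpha>0}f_{\alpha}\otimes e_{\alpha}+\tfrac12\Omega_{0}$, precisely because $[f_{\alpha},e_{-\theta}]=0$ for all $\alpha>0$; so your claim hinges on a sign/ordering convention that you neither fix nor reconcile with the comultiplication formulas you quote.) A symptom of the same problem is your assertion $\delta_{\hbar}(e_{\pm\alpha_{i}})=\pm\,h_{\alpha_{i}}\wedge e_{\pm\alpha_{i}}$: with the coproducts $\Delta_{q}(e_{\alpha_{i}})=e_{\alpha_{i}}\otimes 1+k_{\alpha_{i}}^{-1}\otimes e_{\alpha_{i}}$ and $\Delta_{q}(e_{-\alpha_{i}})=e_{-\alpha_{i}}\otimes k_{\alpha_{i}}+1\otimes e_{-\alpha_{i}}$ both generators acquire the \emph{same} sign, $e_{\pm\alpha_{i}}\otimes h_{\alpha_{i}}-h_{\alpha_{i}}\otimes e_{\pm\alpha_{i}}$. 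Until the affine-generator computation is carried out explicitly and the convention mismatch resolved, the proof has a genuine gap exactly at its crux.
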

\begin{proof}
Since the image of $e_{\delta-\theta}$ in $U_{\hbar}(\mathfrak{g}[u])/{\hbar}U_{\hbar}(\mathfrak{g}[u])$ is $ue_{\theta}$, we can identify the classical
limit of $U_{\hbar}(\mathfrak{g}[u])$ with $U(\mathfrak{g}[u])$.

It remains to prove that for any $a\in\mathfrak{g}[u]$ and its preimage 
$\tilde{a}\in U_{\hbar}(\mathfrak{g}[u])$, we have 
\[{\hbar}^{-1}(\Delta(\tilde{a})-\Delta^{op}(\tilde{a}))\;\; \mathrm{mod}\;\; \hbar=
\delta(a).\]

It is clear that $U_{\hbar}(\mathfrak{g}[u])$ is a Hopf subalgebra 
of $U_{\hbar}(\hat{\mathfrak{g}})$ with zero central charge.
It follows from \cite{D} that 
$\Delta^{op}(\tilde{a})=R\Delta(\tilde{a})R^{-1}$,
where $R=1\otimes 1+\hbar r+...$ is the universal $R$-matrix for  $U_{\hbar}(\hat{\mathfrak{g}})$ (see also \cite{KT} and \cite{J}). 

Writing $\tilde{a}=a+\hbar s$, we see that 
\[{\hbar}^{-1}(\Delta(\tilde{a})-R\Delta(a)R^{-1})\;\; \mathrm{mod}\;\; \hbar=
[r,a\otimes 1+1\otimes a]=\delta(a).\]
This ends the proof of the theorem.
\end{proof}

The Lie bialgebra structures induced by quasi-trigonometric solutions
are obtained by twisting $\delta$ via a classical twist. 
According to Theorem \ref{main}, any such
twist is given by a triple $(\Gamma'_{1},\Gamma'_{2},\tilde{A'})$ of type I or II together with a Lagrangian subspace $\mathfrak{i}_{\mathfrak{a'}}$ of
$\mathfrak{a'}$ such that (\ref{condf}) is satisfied.

On the other hand, Theorem \ref{Hal} states that classical twists can be extended to quantum twists. 

We conclude that any data of the form $(\Gamma'_{1},\Gamma'_{2},\tilde{A'},\mathfrak{i}_{\mathfrak{a'}})$ provides a twisted comultiplication and antipode in the quantum algebra $U_{\hbar}(\mathfrak{g}[u])$.
In case $\mathfrak{g}=\mathfrak{sl}(n)$, some exact formulas were obtained in 
\cite{KPSST}. 

\vspace*{1cm}

\textbf{Acknowledgement.} The authors are thankful to V. Tolstoy 
for valuable discussions. 

\bibliographystyle{amsalpha}

\end{document}